\newcounter{alphthm}
\newtheorem{propriete}[alphthm]{Theorem}
\newtheorem{thm}{Theorem}[section]
\newtheorem{lem}[thm]{Lemma}
\newtheorem{cor}{Corollary}[section]
\theoremstyle{definition}
\newtheorem{definition}{Definition}[section]
\theoremstyle{remark}
\newtheorem{remark}{Remark}[section]
\author{B. Bidabad}
\address{Behroz Bidabad\\
Department of Mathematics and Computer Sciences\\
Amirkabir University of Technology (Tehran Polytechnic)\\
424 Hafez Ave. 15914 Tehran\\ Iran} \email{bidabad@aut.ac.ir}
\author{A. Tayebi}
\address{Akbar Tayebi\\
Department of Mathematics and Computer Sciences\\
Amirkabir University of Technology (Tehran Polytechnic)\\
424 Hafez Ave. 15914 Tehran\\ Iran}\email{akbar\_tayebi@aut.ac.ir}
\def\beq{\begin{equation}}
\def\eeq{\end{equation}}
\def\nn{\nonumber}
\title[A  classification of some Finsler connections]
      {A  classification of some Finsler connections\\ and their applications}
\keywords{ General Finsler connection, Catran-type connection,
Berwald-type connection, Shen-type connection.}
\subjclass{53B40, 53C60}
\begin{document}
\begin{abstract}
Some general Finsler connections are defined. Emphasis is being made
on the Cartan tensor and its derivatives. Vanishing of the
hv-curvature tensors of these connections characterizes
Landsbergian, Berwaldian as well as Riemannian structures. This view
point makes it possible to give a smart representation of connection
theory in Finsler geometry and yields  to a classification of
Finsler connections. Some practical applications of these
connections are also considered.
\end{abstract}
\maketitle
\section{Introduction}
There is always  a hope of finding a solution to some of the
unsolved problems of Finsler geometry by developing a connection
theory. This hope justifies the introduction of  new connections
\cite{BCS1}.  The study of hv-curvature of Finsler connections is,
by some authors, thought to be even urgent for theoretical physics,
see for instance \cite{KT}, \cite{MS} and \cite{T}. Vanishing
hv-curvatures of Berwald and Cartan connections characterize
Berwaldian and Landsbergian structures respectively \cite{Be},
\cite{Ca}. Discovery of Shen connection whose hv-curvature
characterizes the Riemannian structure, seems to completes their
works and permits the classification of Finsler connections into
three different categories \cite{Sh1}.

In this  paper, using the vanishing property of hv-curvatures, we
define three general kinds of Finsler connections and extend the
above property to a general family of Finsler connections. This
point of view enables us to define a more general family of  Finsler
connections which contains some known Finsler connections as special
cases. This characterization gives rise to the classification of
some Finsler connections with respect to the Cartan tensor and its
derivatives, which is a smart representation of Finsler connections
(see table of section 5). The distinguished property of this
connection is the flexibility of its reduced hv-curvature, which
makes it very useful. In fact its reduced hv-curvature may be chosen
to be equal to any linear differential equation formed in terms of
Cartan tensor and its derivatives. The above property makes the
geometric interpretation of the solutions of these differential
equations easy. As  application of this connection, we consider some
examples, especially those in which the flag curvature is constant.
\section{Preliminaries}
 Let $M$ be a n-dimensional $ C^\infty$ manifold.  $T_x M $ denotes
  the tangent space of M at $x$. The tangent bundle of M
 is the union of tangent spaces $TM:=\cup _{x \in M} T_x M$.  We will denote  the
elements of TM by $(x, y)$ where $y\in T_xM$. Let $TM_0 = TM
\setminus \{ 0 \}.$ The natural projection $\pi: TM_0 \rightarrow M$
is given by $\pi (x,y):= x$.

 A {\it Finsler structure} on  $M$ is a function $ F:TM
\rightarrow [0,\infty )$ with the following properties; (i) $F$ is
$C^\infty$ on $TM_0$,\  (ii) $F$ is positively 1-homogeneous on the
fibers of tangent bundle $TM$,  and  (iii) the Hessian of $F^{2}$
with elements $ g_{ij}(x,y):=\frac{1}{2}[F^2(x,y)]_{y^iy^j} $ is
positively defined on $TM_0$.  The pair  $(M,F)$ is then called a
{\it Finsler manifold.} $F$ is Riemannian if $g_{ij} (x,y)$ are
independent of $y \neq 0$.

Let us consider the pull-back tangent bundle $\pi^*TM$ over $TM_0$
defined by $ \pi^*TM=\left\{(u, v)\in TM_0 \times TM_0 |
\pi(u)\\=\pi(v)\right\} $. Take a local coordinate system $(x^i)$ in
M, the local natural frame $\{{{\partial} \over {\partial x^i}}\}$
of  $T_xM$ determines a local natural frame $\partial_i|_v$ for
$\pi^*_vTM$ the fibers of $\pi^*TM$, where  ${
\partial _i |_v=(v,{{\partial} \over {\partial x^i}}| _x )}$,
and $v=y^i{{\partial}\over {\partial x^i}}|_x\in TM_0$. The fiber
$\pi^*_vTM$ is isomorphic to
 $T_{\pi(v)}M$ where $\pi(v)=x$. There is a canonical section $\ell$
 of $\pi^*TM$ defined by $\ell_v=(v,v)/F(v)$.

Let $TTM$ be the tangent bundle of $TM$ and  $\rho$ the canonical
linear mapping $\rho:TTM_0\rightarrow \pi^*TM$ defined by
$\rho(\hat{X})=(z,\pi_{*}(\hat{X}))$ where $\hat{X}\in T_zTM_0$ and
$z\in TM_0$. The bundle map $\rho$ satisfies $ \rho ({\partial \over
{\partial x^i}})=\partial_i$ and $ \rho({\partial \over {\partial
y^i}})=0$.  Let $ V_zTM$ be the set of vertical vectors at $z$, that
is, the set of vectors tangent to the fiber through $z$, or
equivalently $V_zTM=ker \rho$,\  called  the \emph{vertical space}.

Let $\nabla$ be a linear connection on  $\pi^*TM$, that is $\nabla
:T_zTM_0\times \pi^*TM \rightarrow \pi^*TM$ such that $\nabla:
(\hat{X},Y) \rightarrow \nabla_{\hat{X}}Y$. Consider the linear
mapping $\mu_z:T_zTM_0\rightarrow T_{\pi z}M$ defined  by
$\mu_z(\hat{X})=\nabla_{\hat{X}}F\ell$,\ where $\hat{X}\in T_zTM_0$.
The connection $\nabla$ is called a \emph{Finsler connection} if for
every $z\in TM_0$, $\mu_z$ defines an isomorphism of $ V_zTM_0$ onto
$T_{\pi z}M$. Therefore, the tangent space $TTM_0$ in $z$ is
decomposed as $ T_zTM_0=H_zTM\oplus V_zTM$, where $ H_zTM=\ker\mu_z$
is called the {\it horizontal space} defined by $\nabla$.  Indeed
any tangent vector $\hat{X}\in T_zTM_0$ in $z$ decomposes to $
\hat{X}=H\hat{X}+V\hat{X}$ where $ H\hat{X}\in H_zTM$ and
$V\hat{X}\in V_zTM $.  The structural equations of the Finsler
connection $\nabla$ are
 \beq
{\mathcal T}_\nabla(\hat{X},\hat{Y})=
\nabla_{\hat{X}}Y-\nabla_{\hat{Y}}X-\rho[\hat{X},\hat{Y}],\label{torsion}
 \eeq
 \beq
\Omega(\hat{X},\hat{Y})Z=\nabla_{\hat{X}}\nabla_{\hat{Y}}Z-
\nabla_{\hat{Y}}\nabla_{\hat{X}}Z
-\nabla_{[\hat{X},\hat{Y}]}Z,\label{5}
 \eeq
where $X=\rho(\hat{X})$, $Y=\rho(\hat{Y})$ and $Z=\rho(\hat{Z})$.
The tensors ${\mathcal T} _\nabla$ and $\Omega$ are called
respectively the $Torsion$ and $Curvature$ tensors of $\nabla$. They
determine two torsion tensors defined by ${\mathcal S}(X,
Y):={\mathcal T} _\nabla(H\hat{X},H\hat{Y})$ and ${\mathcal
T}(\dot{X},Y):={\mathcal T} _\nabla(V\hat{X},H\hat{Y})$ and three
curvature tensors defined by
 $R(X,Y):=\Omega(H\hat{X},H\hat{Y})$, $
P(X,\dot{Y}):=\Omega(H\hat{X},V\hat{Y})$ and  $
Q(\dot{X},\dot{Y}):=\Omega(V\hat{X},V\hat{Y})$,\  where
$\dot{X}=\mu(\hat{X})$ and $\dot{Y}=\mu(\hat{X})$.

Given a Finsler structure F on M, then  at each point $x \in M$,
$F(v)=F(y^i{{\partial} \over {\partial x^i}}|_{x})$ is a function of
$(y^i) \in \mathbb{R}^n$. The \emph{fundamental tensor} $g$ is
defined by $g:\pi ^* TM \otimes \pi ^* TM \rightarrow [0,\infty)$
with the components $g(\partial_i |_v,\partial_j
|_{v})=g_{ij}(x,y)$. Thus  $(\pi ^* TM,g)$
 becomes a Riemannian vector bundle over
$TM_0$. The \textit{Cartan tensor} $A :\pi ^* TM \otimes \pi ^* TM
\otimes\pi ^* TM \rightarrow \mathbb{R} $ is defined by $
A(\partial_i |_v ,\partial_j |_ {v},\partial_k |_ {v})=A_{ijk}(x,y),
$ where  $ A_{ijk}(x,y)={1 \over 2}F(x,y)[F^2(x,y)]_{y^iy^jy^k}$. If
$A=0$ then  $F$ is  Riemannian.

{\bf Flag curvature}. A flag curvature is a geometrical invariant
that generalizes what in Riemannian geometry is called the sectional
curvature. For all $x \in M$ and  $0\neq y \in T_xM$, $V:=V^i
\frac{\partial}{\partial x^i}$ is called the transverse edge. Flag
curvature is obtained by carrying out the following computation at
the point $(x, y)\in TM_0$, and viewing $y$ and $V$ as sections of
$\pi^*TM$:
\[
K(y, V):=\frac{V^i(y^j \ R_{jikl}\ y^l) V^k}{g(y,y)g(V,V)-[g(y,
V)]^2}
\]
If $K$ is independent of the transverse edge $V$, then $(M,F)$ is
called of {\it scalar flag curvature}. Denoting this scalar by
$\lambda=\lambda(x, y)$, if it has no dependence on either $x$ or
$y$, then the Finsler manifold is said to be of {\it constant flag
curvature.}
\section{General-type Finsler connection}
In this section we define a general family of Finsler connections
which contains some known Finsler connections as special cases.
\begin{definition}
A tensor $S:\pi ^* TM \otimes \pi ^* TM \otimes \pi ^* TM
\rightarrow \mathbb{R}$ is called ``compatible" if it has the
following properties:\\
(1) $S(X, Y,Z)$ is symmetric with respect to $X$, $Y$, $Z$. \\
(2) $S(X, Y,\ell)=0$.\\
(3) $S$ is homogeneous, i.e., $S_{ijk}(x,ty)=S_{ijk}(x,y), \forall t
\in \mathbb{R}$, where $S_{ijk}(x, y)=S(\partial_{i},
\partial_{j}, \partial_{k})$.
\end{definition}
\begin{definition}
Consider a Finsler connection  $D$ on $(M,F)$. Let
 $S$ and $T$ be two compatible tensors on $\pi ^* TM$.\\
 (i) The {\it torsion} tensor ${\mathcal T}_D$ of $D$, defined by
(\ref{torsion}),
 should satisfy
\begin{eqnarray}
{\mathcal T} _D (\hat{X}, \hat{Y})&=&F^{-1}T(\mu(\hat{X}),\rho
(\hat{Y})))-F^{-1}T(\mu(\hat{Y}),\rho(\hat{X})),\label{1}
 \end{eqnarray}
 where $T(X,Y)$ is  defined by $g(T(X, Y), Z):= T(X,Y,Z)$,\ \
  $ \hat{X},\hat{Y} \in T_zTM_0$.\\
 (ii) Let
$ (D_{\hat{Z}} g)(X,Y):= \hat{Z}g(X,Y) -g(D_{\hat{Z}} X, Y) -g(X,
D_{\hat{Z}} Y). $ Then the connection $D$ is called  {\it
almost-compatible} with the Finsler structure if for all $X,Y \in
\pi ^* TM$ and $\hat{Z} \in T_zTM_0$,
\begin{eqnarray}
\nonumber(D_{\hat{Z}} g)(X,Y)&=&2
  A(\rho(\hat{Z}),X,Y)+2F^{-1}
A(\mu(\hat{Z}),X,Y)\\
&-&2S(\rho(\hat{Z}),X,Y)-2F^{-1}
T(\mu(\hat{Z}),X,Y).\label{2}
 \end{eqnarray}
    (iii) $D$ is called {\it metric-compatible} with Finsler structure if
$ (D_{\hat{Z}} g)(X,Y)=0.$
\end{definition}
For torsion-free connections the bundle map
 $\mu$ satisfies
 $\mu({\partial \over \partial y^i})=\partial_i$
and $\mu ({{\partial} \over {\partial x^i}})=N^k _i \partial_k$,
    where $N^k _i =F \Gamma ^k _{ij} \ell ^j$ and $\Gamma ^k _{ij}$ are Christoffel
symbols of the torsion-free Finsler connection $D$.

We have the following general theorem of existence and uniqueness of
linear connections in different versions.

\begin{propriete} (\cite{Sh1}) Let $(M,F)$ be a Finsler manifold. Suppose
 $S$ and $T$ are two compatible tensors in $\pi ^* TM$.
Then there exists a unique  almost-compatible linear connection $D$
with  torsion ${\mathcal T} _D$ on $\pi ^* TM$ satisfying
\textit{(i)} and \textit{(ii)}.\label{A}
\end{propriete}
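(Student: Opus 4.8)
The plan is to prove this by the Finsler analogue of the Koszul formula, exactly as the fundamental theorem of Riemannian geometry is established, only now carried out on sections of $\pi^*TM$ differentiated along $TTM_0$. The conceptual reason the computation closes up is the decomposition $T_zTM_0 = H_zTM\oplus V_zTM$ together with the fact that $\rho$ restricts to an isomorphism on the horizontal space while $\mu$ restricts to an isomorphism on the vertical space. This lets me read both defining conditions one direction at a time: for horizontal $\hat{Z}$ one has $\mu(\hat{Z})=0$, so (\ref{2}) reduces to $(D_{\hat{Z}}g)(X,Y)=2A(\rho\hat{Z},X,Y)-2S(\rho\hat{Z},X,Y)$ and (\ref{1}) makes the horizontal torsion vanish; for vertical $\hat{Z}$ one has $\rho(\hat{Z})=0$, so (\ref{2}) reduces to $(D_{\hat{Z}}g)(X,Y)=2F^{-1}A(\mu\hat{Z},X,Y)-2F^{-1}T(\mu\hat{Z},X,Y)$ while (\ref{1}) fixes the vertical torsion through $T$.

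First I would establish uniqueness. Writing (\ref{2}) as $\hat{X}[g(Y,Z)]=g(D_{\hat{X}}Y,Z)+g(Y,D_{\hat{X}}Z)+\Phi(\hat{X},Y,Z)$, where $\Phi$ collects the four tensorial terms on the right of (\ref{2}) and is symmetric in its last two slots because $A$, $S$, $T$ are symmetric in all three arguments, I would form the cyclic combination $\hat{X}[g(Y,Z)]+\hat{Y}[g(Z,X)]-\hat{Z}[g(X,Y)]$ for lifts $\hat{X},\hat{Y},\hat{Z}$ with $X=\rho\hat{X}$, $Y=\rho\hat{Y}$, $Z=\rho\hat{Z}$. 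Each antisymmetric pair $D_{\hat{U}}\rho\hat{V}-D_{\hat{V}}\rho\hat{U}$ is then replaced, via the torsion prescription (\ref{1}), by $\rho[\hat{U},\hat{V}]$ plus the corresponding $T$-term. The symmetry of $\Phi$ makes the unwanted covariant-derivative terms cancel, leaving an explicit expression for $2g(D_{\hat{X}}Y,Z)$ in terms of $g$, the brackets $\rho[\,\cdot,\cdot\,]$, and the tensors $A$, $S$, $T$. Since $g$ is nondegenerate, this pins down $D_{\hat{X}}Y$, giving uniqueness.

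For existence I would take that Koszul-type expression as the \emph{definition} of $D_{\hat{X}}Y$ and verify the connection axioms: additivity and the Leibniz rule in the section argument (the directional-derivative terms $\hat{Y}[g(Z,X)]$, etc., produce exactly the $\hat{X}[f]$ needed for $D_{\hat{X}}(fY)=(\hat{X}f)Y+f\,D_{\hat{X}}Y$) and $C^\infty(TM_0)$-linearity in $\hat{X}$. Reversing the cyclic computation then shows that this $D$ satisfies (i) and (ii). Here the normalizations $S(X,Y,\ell)=0$, $T(X,Y,\ell)=0$ and the homogeneity of $S$, $T$ are what must be used to check that $\mu(\hat{X})=D_{\hat{X}}(F\ell)$ restricts to an isomorphism on $V_zTM$, so that the constructed $D$ is a genuine Finsler connection rather than merely a linear connection on $\pi^*TM$.

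The step I expect to be the main obstacle is the well-definedness of the Koszul expression, which is more delicate here than in the Riemannian case precisely because $\rho$ has a nontrivial kernel. I must show that $g(D_{\hat{X}}Y,Z)$ depends only on the sections $Y,Z$ and on $\hat{X}$, and not on the chosen lifts $\hat{Y},\hat{Z}$: altering a lift by a vertical vector changes the bracket terms, the $\Phi$-terms, and the directional-derivative terms simultaneously, and these contributions must cancel. This is exactly where the precise form of the correction in (\ref{2}), with its $A$, $S$, $T$ terms split along $\rho$ and $\mu$, is indispensable, and where the $\ell$-normalization enters to control $D(F\ell)$. Once this cancellation is verified, the confirmation of (i) and (ii) is the routine reverse of the uniqueness computation.
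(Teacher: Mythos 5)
First, a point of reference: the paper itself gives no proof of Theorem \ref{A} --- it is imported from \cite{Sh1} --- so your attempt must be measured against Shen's construction, which proceeds in two explicit stages. Measured that way, your Koszul strategy has a genuine gap: you treat $\mu$ as known data, when in fact $\mu_z(\hat{X})=D_{\hat{X}}F\ell$ is defined by the very connection you are trying to produce, and the horizontal space $H_zTM=\ker\mu_z$ invoked in your opening reduction (``for horizontal $\hat{Z}$ one has $\mu(\hat{Z})=0$'') is likewise an unknown, not a given decomposition. Since both the torsion prescription (\ref{1}) and the compatibility condition (\ref{2}) contain $\mu$, your cyclic combination does \emph{not} terminate in ``an explicit expression for $2g(D_{\hat{X}}Y,Z)$ in terms of $g$, the brackets and $A,S,T$'': the terms $F^{-1}T(\mu(\hat{U}),\cdot)$ and $F^{-1}A(\mu(\hat{Z}),\cdot,\cdot)$ survive and still involve $D$. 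As written, the uniqueness argument is incomplete, and the existence step --- ``take the Koszul expression as the definition of $D$'' --- is circular rather than merely delicate, because the right-hand side is not yet a formula.

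The repair, which is essentially Shen's route, is to break the circularity \emph{before} any Koszul computation. Evaluating (\ref{1}) on $\hat{X}=\partial/\partial y^i$, $\hat{Y}=\partial/\partial x^j$ gives $D_{\partial/\partial y^i}\partial_j=F^{-1}T(\mu(\partial/\partial y^i),\partial_j)$; applying this to $\mu(\partial/\partial y^i)=D_{\partial/\partial y^i}(y^j\partial_j)=\partial_i+T(\mu(\partial/\partial y^i),\ell)$ and using the normalization $T(\cdot,\cdot,\ell)=0$ (property (2) of compatible tensors plus full symmetry) forces $\mu(\partial/\partial y^i)=\partial_i$. Thus the vertical part of $D$ is explicit, and your lift-independence worry dissolves, since for vertical $\hat{V}$ one now knows $\hat{V}[g_{ij}]=2F^{-1}A(\mu(\hat{V}),\partial_i,\partial_j)$ outright. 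Next, writing $\mu(\partial/\partial x^j)=N^k_j\partial_k$ with $N$ unknown, the horizontal instances of (\ref{1}) and (\ref{2}) give a Christoffel-type identity in which $N$ enters both through $\delta/\delta x^j$ and through the $F^{-1}A(\mu\,\cdot,\cdot,\cdot)$ term; contracting twice with $y$ and using $A(\cdot,\cdot,\ell)=S(\cdot,\cdot,\ell)=0$ kills every correction term and yields the closed formula $N^k_i=F\{\gamma^k_{ij}\ell^j-A^k_{il}\gamma^l_{ab}\ell^a\ell^b\}$ (the same canonical nonlinear connection for every admissible $S,T$; compare the proof of the Lemma in Section 6). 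Only after $N$, hence $\mu$, is determined does your Koszul formula become explicit, at which point uniqueness and the verification of (i)--(ii) proceed as you describe. So the skeleton of your argument is usable, but the step you label ``the main obstacle is well-definedness'' is misdiagnosed: the decisive step is the a priori determination of $\mu$ (equivalently of the nonlinear connection), without which neither half of your argument can start.
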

Let $\bar \ell$ denote the unique vector field in $HTM$
  such that $\rho(\bar \ell)=\ell$. We define $\dot A,..., \overset{_{m+1}}{A}$
 from   $\pi ^* TM \otimes \pi ^* TM
\otimes \pi ^* TM $  to $ \mathbb{R}$ as follows:
\begin{equation}
\nonumber\dot A (X, Y, Z):= \bar \ell A(X,Y,Z) - A(D_{\bar \ell} X,
Y, Z)-
 A(X,D_{\bar \ell} Y, Z)- A(X,Y,D_{\bar \ell} Z),
 \eeq
 \beq
  \overset{_{m+1}}{A}(X, Y, Z):= \bar \ell\ \overset{_m}{A}(X,Y,Z) -\overset{_m}{A}
 (D_{\bar \ell} X,
Y, Z)- \overset{_m}{A} (X,D_{\bar \ell} Y, Z) - \overset{_m}{A}
(X,Y,D_{\bar \ell} Z),\label{3}
\end{equation}
where $\overset{{\ _0}}{A}:=A$, $\overset{\ _1}{A}:=\dot A$,
$\overset{\ _2}{A}:=\ddot A$,... and $m \in\mathbb{N}$. Obviously,
$\forall m\in\mathbb{N}$, the tensors $\overset{\ _m}{A}$ are
symmetric with respect to $X, Y$ and $Z$. Moreover, using $D_{\bar
\ell}\ \ell=0 $ we have $\overset{\ _m}{A}(X,Y,\ell)=0$.  A Finsler
metric is called a {\it {Berwald metric}} if  for
 any standard local coordinate system
$(x^i,y^i)$ in $TM_0$, the Christoffel symbols $\Gamma^k
_{ij}=\Gamma^k _{ij}(x)$ are functions of $x \in M$ alone. A Finsler
metric is called a {\it{Landsberg metric}} if $\dot A=0$.

By mean of Theorem A,  we can define the general Finsler connection.
 \begin{definition}
Let $(M,F)$ be a Finsler manifold. A { \it general-type Finsler
connection} is defined as  a Finsler connection $D$ on $\pi ^* TM$
such that  its compatible tensors $S$ and $T$ can be defined as
follows:
 \beq S:=
\kappa_{_0}A+\kappa_{_1}\dot A+\kappa_{_2}\ddot A+
\cdots+\kappa_{_m}\overset{_m}{A} \quad and \quad T:= r A,\label{4}
\eeq where the coefficients $\kappa_{_i}$, $i=1,...,m$ and $r$ are
real constants.
 \end{definition}
 {\section{Curvature Tensors}}
 Let $D$ be a Finsler connection defined on $M$. Let $\{e_i\}^n _{i=1}$ be a
   local orthonormal (with respect to $g$)
frame field for the vector bundle $\pi ^* TM$ such
  that $g(e_i ,e_n )=0, i=1,...,n-1$ and $e_n=\ell^i{\partial \over {\partial
  {x^i}}}$. Let $\{\omega^i\}^n_{i=1}$ be its dual co-frame field. One
readily finds that $ \omega^n:= {\partial{F} \over {\partial
  {y^i}}}dx^i=\omega$, which is called {\it Hilbert form}, and
   $\omega(\ell)=1$. Let
  $\rho =\omega^i \otimes e_i$, $De_i = \omega ^{\ j} _i \otimes
e_j$ and $ \Omega e_i=2\Omega ^{\ j} _i \otimes e_j$,
  where $\{\Omega ^{\ j} _{i}\}$ and $\{\omega ^{\ j} _{i}\}$ are called respectively,
   the  {\it curvature forms} and {\it connection forms} of $D$ with respect to
  $\{e_{i}\}$.  We have $\mu:=DF\ell=F\{\omega ^{\ i} _n +d(log F)\delta ^i _n
\}\otimes e_i.$ Put $\omega^{n+i}:=\omega^{\ i} _n  +d(log F)\delta
^i _n.$ It is easy to show that $\{\omega ^i, \omega^{n+i} \}^n
_{i=1}$ is a
  local basis for $T^*( TM_0).$
 The equation (\ref{5}) is equivalent to
   \begin{equation}
   d\omega ^{\ j} _i -\omega ^{\ k}_i \wedge \omega^{\ j}_ k=\Omega^{\
j}_ i.\label{6}
   \end{equation}
Since the $\Omega ^{\ i} _j$ are 2-forms on $TM_0$,  they can be
expanded as
  \begin{equation}
\Omega^{\ j}_ i={1 \over 2}R^{\ j} _{i \ kl} \omega ^k \wedge
\omega^l +P^{\ j} _{i \ kl} \omega ^k \wedge \omega^{n+l}+{1 \over
2}Q^{\ j} _{i \ kl} \omega ^{n+k} \wedge \omega^{n+l}.\label{7}
  \end{equation}
  Let $\{\bar e_i, \dot e_i\}^n _{i=1}$ be the local basis
  for $T(TM_0)$, which is dual to $\{\omega ^i, \omega^{n+i} \}^n
  _{i=1}$, i.e., $\bar e_i \in HTM, \dot e_i \in VTM$ such that
  $\rho(\bar e_i)=e_i, \mu(\dot e_i)=F e_i$.
The objects $R$, $P$ and  $Q$ are called, respectively,  the hh-,
hv- and vv-curvature  tensors of the connection $D$ with the
components $R(\bar e_k,\bar e_l)e_i =R^{\ j}
   _{i \ kl}e_j , \quad P(\bar e_k,\dot e_l)e_i=P^{\ j}_{i \ kl} e_j
\quad$   and $ \quad Q(\dot e_k,\dot e_l)e_i=Q^{\ j} _{i \ kl} e_j.$
  From (\ref{7}) we see that $R^{\ j} _{i \ kl}=-R^{\ j} _{i \ lk}$ and
  $ Q^{\ j} _{i \  lk}=-Q^{\ j} _{i \ kl}$. Let we put
  \begin{equation}
    dg_{ij}-g_{kj} \omega ^{\ k} _i-g_{ik } \omega ^{\ k} _j =
g_{ij| k}\omega ^{k}+ g_{ij.k}\omega ^{n+k},\label{9}
    \end{equation}
  \begin{equation}
    dA_{ijk}-A_{ljk} \omega ^{\ l} _i - A_{ilk}\omega ^{\ l} _j
-A_{ijl}\omega ^{\
    l} _k =A_{ijk | l}\omega ^l +A_{ijk.l}\omega ^{n+l},\label{10}
    \end{equation}
    where the slash "$|$ " and point "$.$ " are horizontal and vertical covariant
    derivatives with respect to the Finsler connection. In a similar
    way $\forall m\in\mathbb{N}$,  we have:
    \begin{equation}
    d \overset{_m}{A}_{ijk}-\overset{_m}{A}_{ljk}\omega ^{\ l} _i-\overset{_m}{A}_{ilk}
    \omega
^{\ l} _j-\overset{_m}{A}_{ijl}\omega ^{\ l}_k
=\overset{_m}{A}_{ijk|l}\omega ^l +\overset{_m}{A}_{ijk.l}\omega
^{n+l},\label{11}
    \end{equation}
 where $\overset{_m}{A}_{ijk}=\overset{_m}{A}( e_i,e_j,e_k)$ and
$\overset{_m}{A^k}_{ij}=g^{kl} \overset{_m}{A}_{ijk} $. From
(\ref{10}) and (\ref{11}), we see that $ A_{ijk|l}$, $A_{ijk.l}$,
 $\overset{_m}{A}_{ijk|l}$ and $\overset{_m}{A}_{ijk.l}$,
  ($\forall m,l\in\mathbb{N}$), are all symmetric with respect to i, j and  k.
  By definition of Landsberg tensor, we have $A_{ijk |n}= \dot A_{ijk}$.
   Here we use the notation
$\overset{_m}{A}_{ijk |n}=\overset{_m}{A}_{ijk |s}\ell^s$  and
 $\overset{_m}{A}_{ijk|n}= \overset{_{m+1}}{A}_{ijk}$. From (\ref{10}) and (\ref{11}),
  we get
   \begin{equation}
   A_{njk | l}=0,\ \  A_{njk. l}=-A_{jkl},\ \ \overset{_m}{A}_{njk
   |l}=0\  \  \textrm{and}\ \
    \overset{_m}{A}_{njk
.l}=-\overset{_m}{A}_{jkl}.\label{12}
\end{equation}
 \setcounter{thm}{0}
\begin{remark}
In general-type connection, the horizontal  and vertical covariant
derivatives of the metric tensor are given by
\[
g_{ij|k}=2((1-\kappa_{_0})A_{ijk}-\kappa_{_1}\dot
A_{ijk}+\cdots+\kappa_{_m}\overset{_m}{A}_{ijk})\ \  \textrm{and } \
\ g_{ij.k}= 2(1-r)A_{ijk}.
\]
\end{remark}
\section{A classification of some Finsler connections}
The following results due to Berwald, Cartan and Shen, determine the
relation between hv-curvature and special Finsler spaces. These
results enable us to classify some non-Riemannian Finsler
connections and distinguish  three different categories.
\begin{propriete} ({\cite{Be}, \cite{Ch}}) Let $(M,F)$ be a Finsler manifold.
Then for the Berwald connection (or Chern connection), hv-curvature
vanishes if and only if F is a Berwald metric.\label{B}
\end{propriete}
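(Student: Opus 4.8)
The plan is to compute the $hv$-curvature $P$ of the Berwald (respectively Chern) connection directly from the structure equations (\ref{6})--(\ref{7}) in a natural coordinate chart, and to read off that it coincides, up to a change of coframe, with the third vertical derivative of the spray coefficients. First I would fix a standard chart $(x^i,y^i)$ on $TM_0$ and recall that for the Berwald connection the connection $1$-forms are purely horizontal, $\omega^{\ j}_i=G^{\ j}_{ik}\,dx^k$ with $G^{\ j}_{ik}=\partial N^j_i/\partial y^k$ and $N^k_i=F\Gamma^k_{ij}\ell^j$ the nonlinear connection coefficients of the geodesic spray (and for the Chern connection $\omega^{\ j}_i=\Gamma^{\ j}_{ik}\,dx^k$); in particular these forms contain no $dy^k$ term. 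The decisive structural observation is that, since they are multiples of the $dx^k$ alone, the quadratic term $\omega^{\ k}_i\wedge\omega^{\ j}_k$ in (\ref{6}) contributes only $dx\wedge dx$, i.e. only to the $hh$-part $R$ of $\Omega^{\ j}_i$. Hence every $hv$-contribution to the curvature must come from $d\omega^{\ j}_i$, and there only from the vertical derivative of the coefficients.

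Next I would carry out this differentiation. Writing $dy^l=\delta y^l-N^l_m\,dx^m$ with $\delta y^l$ the connection-adapted vertical $1$-form, the term $(\partial G^{\ j}_{ik}/\partial y^l)\,dy^l\wedge dx^k$ produces exactly one $\delta y\wedge dx$ piece, while all remaining pieces land in the $dx\wedge dx$ slot. Comparing with the expansion (\ref{7}) and translating from the natural coframe $(dx^i,\delta y^i)$ to the $g$-orthonormal adapted coframe $(\omega^i,\omega^{n+i})$ of Section 4, one obtains that the $hv$-curvature is, up to the frame change and a homogeneity factor of $F$,
\[
P^{\ j}_{i\ kl}\ \doteq\ \frac{\partial G^{\ j}_{ik}}{\partial y^l}\ =\ \frac{\partial^{3}G^{j}}{\partial y^{i}\partial y^{k}\partial y^{l}},
\]
which is precisely the Berwald curvature tensor; for the Chern connection the identical computation yields instead $P^{\ j}_{i\ kl}\doteq\partial\Gamma^{\ j}_{ik}/\partial y^l$.

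Finally I would conclude by the frame-independence of tensorial vanishing. Since $P\equiv 0$ is a coordinate- and frame-independent condition, the displayed identity gives $P\equiv 0$ if and only if $\partial^{3}G^{j}/\partial y^{i}\partial y^{k}\partial y^{l}=0$, i.e. the spray coefficients $G^{j}$ are quadratic in $y$, equivalently $G^{\ j}_{ik}$ depend on $x$ alone. Using $N^k_i=F\Gamma^k_{ij}\ell^j=\Gamma^k_{ij}y^j$ one checks $G^{\ k}_{ij}=\Gamma^k_{ij}+y^m\,\partial\Gamma^k_{im}/\partial y^j$, so the $y$-independence of $G^{\ k}_{ij}$ is equivalent to that of $\Gamma^k_{ij}$, that is, to the condition $\Gamma^k_{ij}=\Gamma^k_{ij}(x)$, which is exactly the definition of a Berwald metric; the Chern case is immediate since $\partial\Gamma^{\ j}_{ik}/\partial y^l=0$ likewise means the $\Gamma^{\ j}_{ik}$ are $y$-independent (one may also note, via (\ref{10}), that the Chern $hv$-curvature equals $-A_{ijk|l}$ up to sign, and $A_{ijk|l}=0$ characterizes Berwald metrics). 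I expect the only delicate point to be the bookkeeping of the coframe change between $(dx^i,\delta y^i)$ and $(\omega^i,\omega^{n+i})$ together with the index and homogeneity conventions; but since the statement concerns only the vanishing of a tensor, this change of frame is immaterial to the stated equivalence.
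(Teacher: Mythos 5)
Your argument is sound in substance, but note at the outset that the paper itself contains no proof of Theorem \ref{B}: it is quoted as a classical result with citations to Berwald and Chern, so there is no in-paper argument to compare line by line. What you give is the standard coordinate proof: since the Berwald and Chern connection $1$-forms in a natural chart are purely horizontal, $\omega^{\ j}_i=G^{\ j}_{ik}\,dx^k$ resp.\ $\Gamma^{\ j}_{ik}\,dx^k$, the quadratic term in (\ref{6}) feeds only the hh-slot, and the hv-part of $\Omega^{\ j}_i$ is the vertical derivative of the coefficients, so $P\equiv 0$ forces a quadratic spray resp.\ $x$-only $\Gamma$. The nearest in-paper analogue is stylistically different: the machinery of Section 5 works in the $g$-orthonormal adapted coframe, differentiating the almost-compatibility relation (as in (\ref{15})--(\ref{19})) and contracting with $\ell$ to get the reduced hv-curvature; Theorem \ref{B} is then the special case of Theorem \ref{thm2} and formulas (\ref{23})--(\ref{24}) with $\kappa_{_1}=1$, $\kappa_{_{i\geq 2}}=0$ (Berwald, $S=A+\dot A$, $T=0$) and all $\kappa_{_i}=0$ (Chern, $S=A$, $T=0$). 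Your computation buys explicitness ($P$ is literally $\partial^3 G^j/\partial y^i\partial y^k\partial y^l$); the paper's route buys uniform treatment of the whole family at once.

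Two repairs are needed, neither fatal. First, your claimed equivalence of the $y$-independence of $G^{\ k}_{ij}$ and of $\Gamma^k_{ij}$ is immediate only in one direction; for the converse, contract $G^{\ k}_{ij}=\Gamma^k_{ij}+y^m\,\partial\Gamma^k_{im}/\partial y^j$ with $y^iy^j$ and use Euler's theorem (the $\Gamma$'s are $0$-homogeneous, so $y^j\,\partial\Gamma^k_{im}/\partial y^j=0$) to conclude the spray is quadratic, and then use $\dot A=0$ on Berwald spaces to identify the Chern coefficients with the $x$-only Berwald ones. Second, the aside that the Chern hv-curvature ``equals $-A_{ijk|l}$'' is not correct as stated: for the Chern connection $g_{ij|k}=0$, and differentiating the compatibility relation as in (\ref{9})--(\ref{10}) controls only the symmetric part $P_{ijkl}+P_{jikl}$ together with the $\ell$-contraction, which recovers (up to sign) the Landsberg tensor $\dot A_{jkl}$; identifying the full $P$ with covariant derivatives of $A$ requires the index-permutation step of Lemma \ref{lem}. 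Since your main line never uses this aside, the proof stands once the first point is patched.
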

 \begin{propriete} ({\cite{Ca}}) Let $(M,F)$ be a Finsler manifold.
  Then for the Cartan connection (or Hashiguchi
connection), hv-curvature vanishes if and only if F is a Landsberg
metric.\label{C}
\end{propriete}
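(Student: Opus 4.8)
The plan is to exhibit the Cartan and Hashiguchi connections as the two members of the general-type family (\ref{4}) whose compatible tensor $T$ equals the Cartan tensor, $T=A$, and then to read off the hv-curvature $P$ from the structure equations. By Theorem \ref{A} the Cartan connection is the member with $S=A$ and $T=A$, i.e. $(\kappa_{_0},\ldots,\kappa_{_m})=(1,0,\ldots,0)$ and $r=1$, while the Hashiguchi connection shares the value $T=A$ (hence $r=1$) but carries the Berwald horizontal coefficients. By the Remark the condition $r=1$ is precisely the vertical metric-compatibility $g_{ij.k}=0$, which fixes the vertical part of the connection forms $\omega^{\ j}_i$ in terms of $A^{\ j}_{ik}$. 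Since the two connections differ only in their horizontal part, and I expect the horizontal coefficient $\kappa_{_0}$ to drop out of the vanishing criterion for $P$, it suffices to carry out the computation once while keeping track of the common vertical structure $T=A$.

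Next I would substitute the expansion (\ref{7}) into the second structure equation (\ref{6}) and collect the coefficient of $\omega^k\wedge\omega^{n+l}$, which by definition is $P^{\ j}_{i\ kl}$. Differentiating the Cartan-tensor identity (\ref{10}) and imposing $g_{ij.k}=0$ identifies the vertical connection coefficients with $A^{\ j}_{ik}$; feeding this back into (\ref{6}) expresses $P^{\ j}_{i\ kl}$ in terms of the horizontal covariant derivatives $A_{ijk|l}$ of the Cartan tensor, together with quadratic terms in $A$ and the vertical derivatives of the horizontal coefficients. The organized slash- and dot-derivatives in (\ref{9})--(\ref{11}) are exactly what make this computation close without leftover undetermined terms.

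The decisive step — and the one I expect to be the main obstacle — is to show that this expression for $P$ collapses onto the single Landsberg datum $\dot A$. Here the distinguished direction $e_n=\ell$ and the homogeneity of $A$ are essential: by (\ref{12}) one has $A_{njk|l}=0$ and $A_{njk.l}=-A_{jkl}$, while by definition $A_{ijk|n}=A_{ijk|s}\ell^s=\dot A_{ijk}$. Contracting the formula for $P$ in the $\ell$-direction and repeatedly applying these identities, I expect the generic horizontal derivatives $A_{ijk|l}$ and the $\kappa_{_0}$-dependent horizontal terms to cancel, so that what survives is, up to a universal nonzero factor, the tensor $\dot A$. The subtlety to be handled with care is precisely that $P\equiv 0$ is a priori stronger than $\dot A=0$; it is the homogeneity relations (\ref{12}) that reduce it to the weaker Landsberg equation, in contrast with the Berwald/Chern case $r=0$, where no such collapse occurs and $P\equiv 0$ forces the stronger Berwald condition. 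Once the reduction $P=0\Leftrightarrow\dot A=0$ is established, the theorem follows for both the Cartan and the Hashiguchi connection by the definition of a Landsberg metric.
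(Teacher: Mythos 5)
Your overall strategy is, in substance, the route the paper itself takes: Theorem C is only cited (to Cartan), and its de facto proof in the paper is the specialization of Theorem \ref{thm1} --- proved via the structure equations, the differentiated almost-compatibility condition (\ref{15}), and the Bianchi-type identities of Lemma \ref{lem} --- to the Cartan-type family with $\tilde S=0$ (Cartan) and $\tilde S=\dot A$ (Hashiguchi), both with $T=A$. Your identification of the two connections inside the family (\ref{4}), the use of $r=1\Leftrightarrow g_{ij.k}=0$ from the Remark, and the $\ell$-contraction via (\ref{12}) to obtain $P_{njkl}=(1+\kappa_{_1})\dot A_{jkl}$ all match the computation leading to the paper's (\ref{21}). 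One labeling slip: Cartan and Hashiguchi differ in $\kappa_{_1}$ (the coefficient of $\dot A$), not in $\kappa_{_0}$; both have $\kappa_{_0}=1$, and $\kappa_{_0}$ does not drop out in general --- perturbing it leaves the Cartan-type family, and the characterization then changes (cf.\ row 9 of the table, where the outcome ``depends on $\kappa_{_i}$ and $r$'').

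The genuine gap is in the direction ``Landsberg $\Rightarrow P\equiv 0$''. Your sketch establishes only that the \emph{contracted} curvature $P_{njkl}$ is a nonzero multiple of $\dot A_{jkl}$; the full tensor $P_{ijkl}$ does not collapse onto $\dot A$ by the homogeneity relations (\ref{12}) alone, since those identities only govern components carrying an index in the $\ell$-direction. After one sets $P_{njkl}=0$, what survives of the paper's formula (\ref{19}) is $P_{ijkl}=C_{kjl|i}-C_{kil|j}$, an antisymmetrized horizontal derivative of the Cartan tensor that is not a priori controlled by $\dot A$. Killing it requires the additional, nontrivial fact that $A_{ijk|l}$ is totally symmetric in all four indices when $\dot A=0$ --- a consequence of the Bianchi-type identity of Lemma \ref{lem} (part 2) interacting with the symmetrized relation (\ref{17}), not of homogeneity --- and this ingredient is absent from your plan. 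By contrast, the converse direction is fine as you set it up for these two specific connections, since $1+\kappa_{_1}\neq 0$; note, however, that for a general Cartan-type connection with $\kappa_{_2},\dots,\kappa_{_m}$ present, the contraction alone yields only a linear relation among $\dot A,\ddot A,\dots$, which is why the paper instead extracts $C^i_{kl|j}=C^i_{jl|k}$ from Lemma \ref{lem} when $P\equiv 0$ and then contracts with $\ell^j$ using (\ref{12}).
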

\begin{propriete} ({\cite{Sh1}}) Let $(M,F)$ be a Finsler manifold.
Then for the Shen connection, hv-curvature vanishes if and only if F
is Riemannian.\label{D}
\end{propriete}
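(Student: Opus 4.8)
The plan is to exhibit the Shen connection as a particular instance of the general-type connection defined by (\ref{4}) and to read off its hv-curvature directly from the structure equations, showing that the Cartan tensor is the only obstruction to its vanishing. As a calibration, note that the Cartan connection is the choice $S=A$, $T=A$, i.e. $\kappa_{_0}=1$, $\kappa_{_i}=0$ $(i\geq 1)$ and $r=1$; by the Remark this makes $g_{ij|k}=g_{ij.k}=0$, so the connection is metric-compatible and its reduced hv-curvature is the Landsberg tensor $\dot A$, which is the content of Theorem C. The Shen connection is instead the choice of constants $\kappa_{_i}$ and $r$ for which the non-metricity is proportional to $A$ itself while the higher tensors $\dot A,\ddot A,\dots$ are arranged to cancel out of the reduced hv-curvature; I would begin by writing these constants explicitly and recording, via the Remark, the resulting $g_{ij|k}$ and $g_{ij.k}$.

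Next I would compute the hv-curvature $P^{\ j}_{i\ kl}$ of the general-type connection from (\ref{6})--(\ref{7}). Concretely, one extracts $P$ as the $\omega^k\wedge\omega^{n+l}$-component of $\Omega^{\ j}_i$; equivalently $P$ is governed by the commutator of the horizontal and vertical covariant derivatives applied to $g_{ij}$ and to $A_{ijk}$, whose exterior-derivative identities are (\ref{9})--(\ref{11}). Feeding in the explicit non-metricities from the Remark rewrites the reduced hv-curvature as a linear combination of $A,\dot A,\dots,\overset{_m}{A}$ with coefficients built from $(1-\kappa_{_0})$, $\kappa_{_1},\dots$ and $(1-r)$. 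A decisive point in this reduction is the use of (\ref{12}) together with the $y$-dependence of $\ell$: the relations $A_{njk.l}=-A_{jkl}$ convert a vertical derivative of an $\ell$-contracted term back into an \emph{undifferentiated} $A$, and it is exactly this mechanism that allows the Cartan tensor itself --- rather than its Landsberg derivative $\dot A$ --- to survive in the reduced curvature.

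Substituting the Shen constants then collapses the linear combination to $c\,A_{ijk}$ with a nonzero constant $c$, so that the hv-curvature vanishes precisely when $A=0$. The easy direction is immediate: if $F$ is Riemannian then $A=0$, hence every $\overset{_m}{A}=0$, the compatible tensors $S$ and $T$ vanish, the connection reduces to the pulled-back Levi-Civita connection, and $P=0$. For the substantive direction one reads off from $P=c\,A=0$ that $A$ vanishes, and by the characterization recalled in the Preliminaries ($A=0$ iff $g_{ij}$ is independent of $y$) this is equivalent to $F$ being Riemannian.

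The main obstacle is the middle step. Computing $P$ from the structure equations requires careful bookkeeping of the horizontal/vertical splitting, of the torsion terms coming from (\ref{1}), and above all of the $\ell$-derivative corrections in (\ref{12}). These corrections are what separate the three connections --- sending the reduced curvature onto the Berwald curvature, onto $\dot A$, or onto $A$ --- so obtaining them exactly, with the right constants, is the crux of the argument; the remaining algebra is then routine.
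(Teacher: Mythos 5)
You should note at the outset that the paper does not prove Theorem \ref{D} at all --- it quotes it from \cite{Sh1}; the closest in-house template is the proof of Theorem \ref{thm1}, and your general strategy (specialize the framework to $S=0$, $T=0$, differentiate the almost-compatibility condition, and exploit the contraction identities (\ref{12})) is indeed that template. But the one concrete claim your argument rests on is false: the reduced hv-curvature of the Shen connection does not collapse to $c\,A_{jkl}$ with $c\neq 0$; up to the paper's sign conventions it is $A_{jkl}+\dot A_{jkl}$. The Landsberg term does not cancel --- when you contract the analogue of (\ref{19}) with $\ell$, the slot $A_{jkl|n}=\dot A_{jkl}$ survives alongside the $A_{njk.l}=-A_{jkl}$ mechanism you correctly identify. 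The paper itself shows your claim cannot be right: the Berwald connection has identically vanishing reduced hv-curvature (put $\kappa_{_1}=1$ in (\ref{24})), the Christoffel difference between the Berwald and Shen connections is $A^i_{jk}+\dot A^i_{jk}$ (proof of Theorem \ref{thm8}), and Theorems \ref{thm6} and \ref{thm8} invoke completeness and a bounded Cartan tensor precisely because the resulting ODE $A+\dot A=0$, i.e.\ $A(t)=e^{-t}A(0)$ along geodesics, does not force $A=0$ pointwise. If the reduced hv-curvature were $cA$ with $c\neq0$, those hypotheses would be superfluous and Theorem \ref{thm6} would be trivial; so Theorem \ref{D}, which carries no completeness hypothesis, cannot be extracted from the reduced curvature alone.

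The repair is close to what you set up, but it must use vanishing of the \emph{full} tensor $P_{ijkl}$, through the symmetric part of the curvature rather than the reduced trace. Differentiating the almost-compatibility condition with $S=T=0$ gives the analogue of (\ref{17}), schematically $P_{ijkl}+P_{jikl}=-2A_{ijk.l}+2A_{ijl|k}-2A_{ijm}P^{\ m}_{n\ kl}$; contracting the index $i$ with $\ell$ and using (\ref{12}) (namely $A_{njk.l}=-A_{jkl}$, $A_{njl|k}=0$, $A_{njm}=0$) yields $P_{njkl}+P_{jnkl}=2A_{jkl}$, so $P=0$ forces $A=0$ pointwise, and hence $F$ is Riemannian; the converse is, as you say, immediate, since $A=0$ kills the non-metricity and the connection becomes the pulled-back Levi-Civita connection. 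Finally, a structural remark: your write-up explicitly defers the computation it calls the crux (``obtaining them exactly, with the right constants''), and the only concrete output you assert for that step is the incorrect constant; as it stands the proposal is a plan whose decisive step is both missing and mispredicted.
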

The remarkable property of Shen connection,  proved by Theorem D,
comes from the fact that vanishing of  its hv-curvature singles out
Riemannian metrics. In contrast, Cartan, Berwald, Chern and
Hashiguchi connections do not possess this property. Thus we have
 three different types of Finsler connections. Theorems \ref{thm1},
 \ref{thm2} and \ref{thm3} of this paper, deal with  a more general
case and give rise to new families of Finsler connections that we
call Berwald-type, Cartan-type and Shen-type connections which are
defined according to the behavior of their hv-curvature.
\begin{definition} Let $(M,F)$ be a Finsler manifold. A Finsler
connection is called of {\it Berwald-type} (resp. {\it of
Cartan-type} or {\it Shen-type}) if and only if  vanishing of its
hv-curvature,  reduces the Finsler structure  to the Berwaldian
(resp. Landsbergian or Riemannian) one.
\end{definition}
From this  view point one can compare some of the non-Riemannian
Finsler connections according to the compatibility of the tensors
$S$ and $T$.
\begin{center}  \textbf{ A classification of Finsler
connections  according  to their compatible tensors $S$ and
$T$}\end{center} {\small
 \centerline
{\begin{tabular} {||l||c||c||c||c||} \hline\hline
&\multicolumn{2}{|c||}{\textbf{Compatible tensors}}&\ &
\\ \hline \textbf{Connection}& \textbf{S}&\textbf{T}& \textbf{Metric compatibility} & \textbf{Torsion}\\
 \hline \hline\hline 1. Berwald & $A+\overset{\bullet}{A}$ &
  \quad $0$& almost compatible&free\\
\hline 2. Chern- Rund &$A$ &\quad$0$&  almost compatible& free \\
\hline  3. Berwald-type & $A+\kappa_{_1}
\overset{\bullet}{A}+\cdots+\kappa_{_m}\overset{_m}{A}$ &\quad $0$&
almost
 compatible&  free\\
\hline \hline \hline 4.  Cartan & $A$ & \quad$A$& metric compatible&
not free  \\
\hline  5.  Hashiguchi &$A+\overset{\bullet}{A}$ & \quad$A$& almost
compatible& not free\\
\hline  6. Cartan-type &$A+\kappa_{_1}
\overset{\bullet}{A}+\cdots+\kappa_{_m}\overset{_m}{A}$  \ \ \ \ \ \
\  &\ \ \
$A$& depends on $\kappa_{_i}$& not free\\
\hline \hline\hline  7. Shen & $0$ & \quad$0$& almost compatible& free\\
\hline  8. Shen-type& $\kappa_{_1}
\overset{\bullet}{A}+\cdots+\kappa_{_m}\overset{_m}{A}$ & \quad$0$&
almost compatible& free\\
\hline \hline \hline 9. General-type  &$\kappa_{_0} A+\kappa_{_1}
\overset{\bullet}{A}+\cdots+\kappa_{_m}\overset{_m}{A}$ &\quad$r A$&
depends on
$\kappa_{_i}$ and $r$& \footnotesize{depends on $r$}\\
\hline
\end{tabular}}}
\bigskip
 In this table $A$, $\dot A$, $\ddot A$,...,
$\overset{_m}{A}$ are Cartan tensor and their covariant derivatives,
$\kappa_{_i}$ and $r$ are arbitrary real constants. The connections
1, 2, and 3 belong to the Berwald-type category. The connections 4,
5, and 6 are Cartan-type connections.
 The connections 7 and 8 belong to the Shen-type Category. The
 connection 9 contains all other connections. Looked at the freeness of torsion  point of
 view, the  Shen connection is the one most similar to
  the Levi-Civita connection. But from the metric compatibility view
  point, it is the Cartan connection which is closest  to
  the Levi-Civita connection.

Now we extend Theorem $C$ to Cartan-type connections and show that
the hv-curvature tensor of this type of connections
 characterizes Landsbergian structures.
\begin{thm}
Let $(M,F)$ be a Finsler manifold. Then for  Cartan-type
connections, hv-curvature vanishes if and only if F is
Landsbergian.\label{thm1}
\end{thm}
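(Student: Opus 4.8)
The plan is to prove both implications by comparing the Cartan-type connection $D$ with the Cartan connection $\overset{c}{D}$ of Theorem~\ref{C}. For a Cartan-type connection one has $\kappa_{_0}=1$ and $r=1$, so the Remark of Section~4 gives $g_{ij.k}=0$ and $g_{ij|k}=2Q_{ijk}$ with $Q_{ijk}:=-\kappa_{_1}\dot A_{ijk}+\cdots+\kappa_{_m}\overset{_m}{A}_{ijk}$. Because $r=1$, the vertical metric coefficient and the $hv$-torsion (fixed by $T=A$) agree with those of $\overset{c}{D}$, so $D$ and $\overset{c}{D}$ share the same vertical connection coefficients and differ only horizontally. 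Both are $h$-torsion-free (i.e. $\mathcal{S}=0$), and $Q$ is totally symmetric; the resulting Koszul-type identity then forces the difference of their horizontal coefficients to be the totally symmetric, purely horizontal tensor $B^{\ j}_{i\ k}=-Q^{\ j}_{ik}$, built entirely from the iterated derivatives $\overset{_p}{A}$, $p\ge 1$.

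The implication \emph{$F$ Landsbergian $\Rightarrow P=0$} is immediate. If $\dot A=0$, then iterating the defining relation (\ref{3}) shows $\overset{_p}{A}=0$ for every $p\ge1$, since each is the $\bar\ell$-derivative of its identically vanishing predecessor. Hence $Q=0$, the difference tensor $B$ vanishes, and by the uniqueness in Theorem~\ref{A} the connection $D$ coincides with the Cartan connection. Theorem~\ref{C} then gives $P=0$, precisely because $F$ is Landsbergian.

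For the converse I would substitute $\omega^{\ j}_i=\overset{c}{\omega}{}^{\ j}_i+B^{\ j}_{i\ k}\omega^k$ into the structure equation (\ref{6}) and read off the $hv$-part through (\ref{7}). As $B\wedge B$ is purely horizontal it does not affect the $hv$-component, and a short computation yields $P^{\ j}_{i\ kl}=\overset{c}{P}{}^{\ j}_{i\ kl}+Q^{\ j}_{ik.l}-Q^{\ j}_{ia}A^{\ a}_{kl}$, the dot being the vertical covariant derivative. Contracting the horizontal index with $\ell^k$ and invoking (\ref{12}) in the forms $\overset{_p}{A}_{ijk}\ell^k=0$ and $\overset{_p}{A}_{ijk.l}\ell^k=-\overset{_p}{A}_{ijl}$, together with $A^{\ a}_{kl}\ell^k=0$, kills the product term and collapses the vertical-derivative term to $-Q^{\ j}_{il}$. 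Since the Cartan contribution vanishes iff $F$ is Landsbergian (Theorem~\ref{C}), its $\ell^k$-contraction being $-\dot A^{\ j}_{il}$ up to normalization, the hypothesis $P=0$ becomes a single tensor identity expressing $\dot A$ through the higher derivatives $\ddot A,\dots,\overset{_m}{A}$.

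The delicate point, and the main obstacle, is to deduce $\dot A=0$ from that identity, since $Q$ has dragged the higher derivatives $\overset{_p}{A}$ ($p\ge2$) into the reduced $hv$-curvature alongside $\dot A$. Here I would use that each $\overset{_p}{A}$ is the horizontal $\ell$-derivative $\overset{_{p-1}}{A}{}_{|n}$ of its predecessor, so that along each geodesic the identity is a linear relation between $\dot A$ and its successive $\bar\ell$-derivatives; feeding in the top-order information carried by the remaining components of $P=0$ and the relations (\ref{12}) to control the leading term $\overset{_m}{A}$, one removes the derivatives from the highest order downward and is left with $\dot A=0$. Showing that these higher Cartan derivatives cannot conspire to offset the first-order Landsberg term is the crux; the rest is bookkeeping with the structure equations.
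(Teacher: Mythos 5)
Your forward implication is sound and in fact a little cleaner than the paper's: if $\dot A=0$ then iterating (\ref{3}) gives $\overset{_p}{A}=0$ for all $p\ge 1$, so $S=A$, $T=A$, and the uniqueness in Theorem \ref{A} identifies $D$ with the Cartan connection, whence Theorem \ref{C} yields $P=0$. (The paper instead computes $P_{ijkl}=C_{kjl|i}-C_{kil|j}$ directly from its formula (\ref{19}) and uses the total symmetry of $C_{ijk|l}$ on Landsberg spaces; both routes work, and your observation that $\overset{_p}{A}(\cdot,\cdot,\ell)=0$ forces the two connections to share the same nonlinear connection is correct and worth keeping.)

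The converse, however, has a genuine gap, and it is exactly the point you flag as ``the crux.'' Your plan funnels the hypothesis $P=0$ into the reduced identity coming from the $\ell$-contraction — in the paper's notation, (\ref{21}) gives $0=P_{njkl}=\dot A_{jkl}+\kappa_{_1}\dot A_{jkl}+\cdots+\kappa_{_m}\overset{_m}{A}_{jkl}$ — and then hopes to eliminate the higher derivatives ``from the highest order downward.'' No such algebraic elimination exists: since $\overset{_{p+1}}{A}$ is the $\bar\ell$-derivative of $\overset{_p}{A}$, this identity is a genuine linear ODE along geodesics, and it admits nontrivial (typically exponential) solutions; the higher derivatives \emph{can} conspire to offset $\dot A$ pointwise. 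This is precisely why the paper's Theorems \ref{thm5}--\ref{thm7}, which argue exactly through such reduced identities, must add completeness and boundedness hypotheses to force the coefficients of the exponential solutions to vanish — hypotheses absent from Theorem \ref{thm1}. The missing idea is the paper's Lemma \ref{lem}(2), obtained by differentiating the first structure equation (\ref{13}): because the torsion of every Cartan-type connection is fixed by $T=A$ alone, independently of the $\kappa_{_i}$, one gets
\begin{equation*}
P^{\ i}_{j\ kl}=P^{\ i}_{k\ jl}+C^i_{kl|j}-C^i_{jl|k}+C^i_{jr}P^{\ r}_{n\ kl}-C^i_{kr}P^{\ r}_{n\ jl},
\end{equation*}
so the full hypothesis $P=0$ yields the symmetry $C^i_{kl|j}=C^i_{jl|k}$ outright; contracting $j$ with $\ell$ and using $C_{njk|l}=0$ from (\ref{12}) gives $\dot C^i_{kl}=C^i_{kl|n}=C^i_{nl|k}=0$, i.e. $\dot A=0$, pointwise and with no global assumptions. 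In other words, the extra information you were reaching for in ``the remaining components of $P=0$'' enters through the torsion Bianchi identity, not through further manipulation of the $hv$-curvature formula, and without it your converse does not close.
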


To prove Theorem \ref{thm1},  we need the following lemma.
\setcounter{thm}{0}
 \begin{lem}
Let $(M,F)$ be a Finsler manifold. Then for  Cartan-type
connections we have\\
  $ 1) \quad R^{\ i}_{j\ kl}+R^{\ i}_{k\ lj}+R^{\ i}_{l\ jk}=
  C^i_{jm}R^{\ m}_{n\ kl}+C^i_{km}R^{\ m}_{n\ lj}
   +C^i_{lm}R^{\ m}_{n \ jk},\quad
    $\\
 $
   2) \quad P^{\ i}_{j\ kl}=P^{\ i}_{k\ jl}+ C^i_{kl | j}-C^i_{jl | k}+
   C^i_{jr}P^{\ r}_{n\ kl}-C^i_{kr}P^{\ r}_{n\ jl},\quad \quad
   $\\
 $
  3) \quad Q^{\ i}_{j\ kl}=Q^{\ i}_{j\ lk}+2( C^i_{jk .l}-C^i_{jl .k})+
   2(C^i_{mk}C^m_{jl}-C^i_{ml}C^m_{jk})+C^i_{jm}(Q^{\ m}_{n\ kl}
   -Q^{\ m}_{n\ lk}),
  $\\
  where $R_{ijkl}=g_{sj}R^{\ s}_{i\ kl}$, $P_{ijkl}=g_{sj}P^{\ s}_{i\ kl}$,
   $Q_{ijkl}=g_{sj}Q^{\ s}_{i\ kl}$, $C^i_{jk}=F^{-1}g^{im}A_{mjk}$.
   \label{lem}
\end{lem}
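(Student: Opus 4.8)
The plan is to read off all three identities from a single first Bianchi identity obtained by differentiating the torsion structure equation, and then projecting it onto the three natural families of $3$-forms built from $\{\omega^i,\omega^{n+i}\}$.

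First I would record the torsion of a Cartan-type connection in the moving coframe. Writing the connection forms as $\omega^{\ i}_j=\Gamma^i_{jk}\omega^k+C^i_{jk}\omega^{n+k}$, the condition $T=A$ (that is $r=1$) forces, via the vertical part of (\ref{2}), the vertical non-metricity to vanish, $g_{ij.k}=2(1-r)A_{ijk}=0$, and fixes the $(hv)$-torsion to be the Cartan tensor; hence the vertical coefficient of $\omega^{\ i}_j$ is exactly $C^i_{jk}=F^{-1}g^{im}A_{mjk}$. Since $H=\ker\mu$ and $V=\ker\rho$, formula (\ref{1}) gives $\mathcal{T}_D(H\hat X,H\hat Y)=0$, so there is no $(h)h$-torsion and the first structure equation reads $d\omega^i=\omega^j\wedge\omega^{\ i}_j+\Theta^i$ with torsion two-form $\Theta^i=C^i_{jk}\,\omega^j\wedge\omega^{n+k}$. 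The crucial point for the whole lemma is that $\Theta^i$ and $C^i_{jk}$ do not depend on the constants $\kappa_{_i}$: the tensor $S$ only reshuffles the horizontal coefficients $\Gamma^i_{jk}$, so the torsion-driven identities below hold verbatim for every Cartan-type connection.

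Next I would apply $d$ to the first structure equation. Using $d^2\omega^i=0$, substituting the first structure equation for $d\omega^j$ and the second structure equation (\ref{6}) for $d\omega^{\ i}_j$, the cubic connection-form terms cancel after relabeling and one is left with the first Bianchi identity $\omega^j\wedge\Omega^{\ i}_j=d\Theta^i+\Theta^j\wedge\omega^{\ i}_j$. Expanding the left-hand side by (\ref{7}) produces the antisymmetrizations of $R$, $P$, $Q$ in precisely the index slots of the statement; expanding $d\Theta^i$ and $\Theta^j\wedge\omega^{\ i}_j$ produces the Cartan-tensor terms. In computing $d\Theta^i$ one must differentiate $\omega^{n+k}$, and here $\omega^{n+i}=\omega^{\ i}_n+d(\log F)\delta^i_n$ together with (\ref{6}) injects the curvature form $\Omega^{\ k}_n$; by (\ref{7}) this single term $-C^i_{jk}\,\omega^j\wedge\Omega^{\ k}_n$ is exactly what generates the $n$-indexed curvatures $R^{\ m}_{n\ kl}$, $P^{\ r}_{n\ kl}$ and $Q^{\ m}_{n\ kl}$ feeding all three right-hand sides.

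Finally I would match coefficients. The coefficient of $\omega^j\wedge\omega^k\wedge\omega^l$ gives the cyclic identity in part $1$ (here $\Theta^j\wedge\omega^{\ i}_j$ has no purely horizontal part, so the only inhomogeneity is the $C^i_{jm}R^{\ m}_{n\ kl}$ term from $d\Theta^i$); the coefficient of $\omega^j\wedge\omega^k\wedge\omega^{n+l}$ gives part $2$, with $C^i_{kl|j}$ arising from the horizontal slot of $dC^i_{jk}$ and $C^i_{jr}P^{\ r}_{n\ kl}$ from the $\Omega^{\ k}_n$ term; and the coefficient of $\omega^j\wedge\omega^{n+k}\wedge\omega^{n+l}$ gives part $3$, where $C^i_{jk.l}$ comes from the vertical slot of $dC^i_{jk}$, the quadratic terms $C^i_{mk}C^m_{jl}$ come from $\Theta^j\wedge\omega^{\ i}_j$, and the factor $2$ reflects the antisymmetry of the two vertical legs. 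The main obstacle is the bookkeeping of this projection: keeping the $\frac12$'s of (\ref{7}) consistent with the antisymmetrizations, correctly normalizing the $F^{-1}$ coming from $\mu(\dot e_i)=Fe_i$, and using the homogeneity relations (\ref{12}) (for instance $A_{njk}=0$ under contraction with $\ell$) to simplify the terms in which a frame index is replaced by $n$. No single step is conceptually deep, but the sign and index tracking in the three-form projection is where errors are most likely.
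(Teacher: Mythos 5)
Your proposal is essentially the paper's own proof: the authors likewise write the torsion structure equation $d\omega^i=\omega^j\wedge\omega^{\ i}_j-C^i_{kl}\,\omega^k\wedge\omega^{n+l}$ (their (\ref{13})), exterior differentiate it using (\ref{6}) and (\ref{10}) to obtain the first Bianchi identity $\omega^j\wedge\Omega^i_j=(C^i_{kl|j}\omega^j+C^i_{kl.j}\omega^{n+j})\wedge\omega^k\wedge\omega^{n+l}-C^i_{lm}C^m_{jk}\,\omega^j\wedge\omega^{n+k}\wedge\omega^{n+l}-C^i_{kl}\,\omega^k\wedge\Omega^l_n$ (your $\Omega^{\ l}_n$ term included), and then read off the three identities by expanding $\Omega^i_j$ via (\ref{7}) and matching the $hh$-, $hv$- and $vv$-type $3$-form coefficients, exactly as you describe; the $\kappa$-independence of the torsion is also the implicit reason their argument covers all Cartan-type connections. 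The only discrepancy is your sign convention for the torsion two-form $\Theta^i$ relative to (\ref{13}), which you flag yourself and which does not affect the argument.
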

\begin{proof} Let's consider the Cartan-type connection with compatible tensors $S=
A+\kappa_{_1} \dot A+\cdots+\kappa_{_m}\overset{_m}{A} $ and $T=A$.
Following (\ref{1}) and (\ref{2}),  there exits a
 connection 1-forms $\{\omega^i_j\}$  satisfying the following
torsion and almost compatibility conditions.
 \begin{equation}
   d\omega ^i =\omega ^j \wedge \omega^{\ i} _j-C^i_{kl}\ \omega ^k \wedge
   \omega^{n+l},\label{13}
 \end{equation}
   \begin{equation}
dg_{ij}=g_{kj} \omega ^{\ k} _i +g_{ik } \omega ^{\ k} _j -
 2\kappa_{_1}\dot A_{ijk}\omega ^k-\cdots-
 2\kappa_{_m}\overset{_m}{A}_{ijk}\omega ^k.\label{14}
  \end{equation}
 Differentiating (\ref{13}) and using (\ref{6}) and (\ref{10}), we get:
\[
\omega^j \wedge
\Omega^i_j=(C^i_{kl|j}\omega^j+C^i_{kl.j}\omega^{n+j})\wedge
 \omega^k \wedge \omega^{n+l}-C^i_{lm}C^m_{jk} \omega^j \wedge \omega^{n+k}
 \wedge \omega^{n+l}-C^i_{kl}\omega^k
 \wedge \Omega^l_n.
\]
Replacing $\Omega^i_j$ by (\ref{7}),  we prove the Lemma.
 \end{proof}
 \noindent{\it  \emph{\textbf{Proof of Theorem \ref{thm1}}}}:
Let $(M,F)$ be a Finsler manifold with
 Cartan-type connection and compatible tensors  $S=A+\tilde{S}$ and $T=A$,
  where $\tilde{S}=\kappa_{_{1}}\dot A+\cdots+
 \kappa_{_m}\overset{_m}{A}$. Then the almost compatibility condition (\ref{14})
 becomes
   \begin{equation}
dg_{ij}=g_{kj} \omega ^{\ k} _i +g_{ik } \omega ^{\ k} _j -
 2\widetilde{S}_{ijk}\omega ^k.\label{15}
  \end{equation}
Differentiating this relation leads to
 \[
g_{kj} \Omega^{\ k}_i+g_{ik}\Omega^{\ k}_j=2(\widetilde{S}_{ijk | s}
\omega ^s+\widetilde{S}_{ijk . s} \omega ^{n+s} )\wedge
\omega^k+2C^i_{uv}\widetilde{S}_{ijk}\omega ^{u} \ \wedge \omega
^{n+v}.
  \]
From this relation and (\ref{7}) we have
  \begin{equation}
  R_{ijkl}+R_{jikl}=2(\widetilde{S}_{ijl|k}-\widetilde{S}_{ijk|l}),\quad
  \quad\label{16}
  \end{equation}
   \begin{equation}
   P_{ijkl}+P_{jikl}=-2(\widetilde{S}_{ijk.l}-C^u_{kl}\widetilde{S}_{uij}),\label{17}
    \end{equation}
  \begin{equation}
Q_{ijkl}+Q_{jikl}=0.\quad \quad \quad\quad \quad \quad \quad
\quad\label{18}
   \end{equation}
 Permuting $i,j$  and $k$ in (\ref{17}) and using Lemma 1
 yields
   \[
    P_{ijkl}=-\widetilde{S}_{ijk.l}+(C^u_{kl}\widetilde{S}_{uij}+
    C^u_{il}\widetilde{S}_{ujk}-C^u_{jl}\widetilde{S}_{uki})+
     (C_{kjl|i}-C_{ijl|j})
 \]
 \begin{equation}
  \hspace{6.5cm}+(C_{kiv}P^v_{njl}-C_{jkv}P^v_{nil}).\label{19}
  \end{equation}
Multiplying this relation by $y^i$ and replacing
$\widetilde{S}=\kappa_{_1} \dot
A+\cdots+\kappa_{_m}\overset{_m}{A}$,  we get
\begin{equation}
P_{njkl}=\dot C_{ijk}+\{\kappa_{_{1}}\dot
A_{ijk}+\cdots+\kappa_{_m}\overset{_m}{A}_{ijk}\}.\label{21}
\end{equation}
If $F$ is a Landsbergian manifold, from the above relation, we have
$P_{njkl}=0$. Therefore by replacing this value in (\ref{19}) we
find $ P_{ijkl}=C_{kjl|i}-C_{kil|j}$. In the case of Landsbergian
manifolds, $C_{ijk|l}$ is totally symmetric in all of its four
indices and we have $ P_{ijkl}=0 $. Conversely, let hv-curvature be
zero.  Then by Lemma 1,  we have $ C^i_{kl | j}=C^i_{jl | k}$,
therefore $M$ is Landsbergian. \ \ \hspace{4.5cm}$\Box$
\begin{thm}
Let $(M,F)$ be a Finsler manifold. Then for  Berwald-type
connections, hv-curvature vanishes if and only if F is a Berwaldian
metric.\label{thm2}
\end{thm}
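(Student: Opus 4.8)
The plan is to imitate, step for step, the proof of Theorem \ref{thm1}, altering only what the passage from a Cartan-type to a Berwald-type connection forces. A Berwald-type connection has compatible tensors $S=A+\widetilde S$, with $\widetilde S=\kappa_{_1}\dot A+\cdots+\kappa_{_m}\overset{_m}{A}$, and $T=0$; thus $\kappa_{_0}=1$, $r=0$, and by (\ref{1})--(\ref{2}) the connection is torsion-free with non-metricity $g_{ij|k}=-2\widetilde S_{ijk}$ and $g_{ij.k}=2A_{ijk}$. The equations replacing (\ref{13}) and (\ref{14}) are therefore
\[
d\omega^i=\omega^j\wedge\omega^{\ i}_j,\qquad dg_{ij}=g_{kj}\omega^{\ k}_i+g_{ik}\omega^{\ k}_j-2\widetilde S_{ijk}\omega^k+2A_{ijk}\omega^{n+k}.
\]
Compared with the Cartan-type case, two things change: the torsion term $-C^i_{kl}\omega^k\wedge\omega^{n+l}$ is gone, and the metric equation gains the vertical term $2A_{ijk}\omega^{n+k}$ (because $r=0$).

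First I would record the Bianchi identities, as in Lemma \ref{lem}, by differentiating the first structure equation. Since the torsion now vanishes, this collapses to $\omega^j\wedge\Omega^{\ i}_j=0$, and its mixed component gives the clean symmetry $P^{\ i}_{j\ kl}=P^{\ i}_{k\ jl}$, with none of the Cartan-tensor corrections of Lemma \ref{lem}(2). Next I would differentiate the metric equation exactly as (\ref{14}) was differentiated to give (\ref{16})--(\ref{18}). The new vertical term feeds, through (\ref{10}), a horizontal derivative of the Cartan tensor into the computation, so the symmetrized hv-relation replacing (\ref{17}) picks up an extra summand, a horizontal covariant derivative $A_{ijl|k}$ of the Cartan tensor. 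Combining this with the Bianchi symmetry and permuting indices (as in the passage (\ref{17})$\to$(\ref{19})), I would solve for $P_{ijkl}$ itself; the quadratic Cartan terms must cancel here — they already do when $\widetilde S=0$, where the result is the classical Chern hv-curvature, a single horizontal derivative of $A$ — leaving $P_{ijkl}$ as a fixed linear combination of $A_{ijk|l}$ and of derivatives of $\widetilde S$.

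For the ``if'' direction a Berwald metric satisfies $A_{ijk|l}=0$, hence $\dot A=\ddot A=\cdots=0$, so $\widetilde S\equiv0$ with all its derivatives and the solved formula gives $P\equiv0$; equivalently $\widetilde S=0$ makes the connection coincide, by the uniqueness in Theorem \ref{A}, with the Chern--Rund connection (line $2$ of the table), and Theorem \ref{B} applies. The ``only if'' direction, where one sets $P=0$ and must recover $A_{ijk|l}=0$, is the main obstacle, and it is genuinely harder than in Theorem \ref{thm1}: there the Bianchi identity carried the terms $C^i_{kl|j}-C^i_{jl|k}$ and so detected the Landsberg condition immediately, whereas here the torsion-free Bianchi identity degenerates to the tautology $P^{\ i}_{j\ kl}=P^{\ i}_{k\ jl}$ and sees nothing. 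One must instead read $A_{ijk|l}$ off the full solved expression, in which it is entangled with the vertical and horizontal derivatives of $\widetilde S=\sum_s\kappa_{_s}\overset{_s}{A}$ (themselves iterated horizontal derivatives of $A$ along $\bar\ell$, simplified on contraction with $\ell$ by (\ref{12})). The cleanest way to finish is a bootstrap: extract $\dot A=0$ from $P=0$, so that $\widetilde S\equiv0$ and the connection reduces to Chern--Rund, and then quote Theorem \ref{B} to conclude that $F$ is Berwaldian. Carrying out that extraction — disentangling the mixed derivatives of $A$ to isolate $A_{ijk|l}=0$, and verifying the cancellation of the quadratic Cartan terms in the solved formula — are the two points needing real care.
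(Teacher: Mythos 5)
Your overall route is the paper's route: the paper's own proof is explicitly a sketch that derives the Berwald-type analogue of (\ref{19}), namely the hv-curvature formula (\ref{23}), contracts with $\ell$ to get the reduced hv-curvature (\ref{24}), $P_{njkl}=\{\kappa_{_1}\dot A_{jkl}+\cdots+\kappa_{_m}\overset{_m}{A}_{jkl}\}-\dot A_{jkl}$, and then says ``Using these relations, the theorem will follow.'' Your structure equations, the torsion-free Bianchi symmetry $P^{\ i}_{j\ kl}=P^{\ i}_{k\ jl}$, and your ``if'' direction are all sound; indeed the reduction is clean: Berwaldian implies $\dot A=0$, hence $\overset{_m}{A}=0$ for all $m\geq 1$, so $\widetilde S=0$, the uniqueness in Theorem \ref{A} identifies the connection with the Chern--Rund connection, and Theorem \ref{B} gives $P=0$. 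One concrete misstatement, however: your claim that the quadratic Cartan terms cancel, leaving $P_{ijkl}$ a linear combination of $A_{ijk|l}$ and derivatives of $\widetilde S$, contradicts what the computation actually yields. The paper's (\ref{23}) retains the terms $A_{kis}P^{\ s}_{n\ jl}-A_{jks}P^{\ s}_{n\ il}-A_{ijs}P^{\ s}_{n\ kl}$, exactly as (\ref{19}) does in the Cartan-type case; even at $\widetilde S=0$ the Chern hv-curvature is not a single horizontal derivative of $A$ but three $A_{\cdot\cdot\cdot|\cdot}$ terms plus products of $A$ with the reduced hv-curvature. These terms are harmless for the ``only if'' direction (they vanish once $P=0$ is assumed), but your solved formula is wrong as an identity.

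The genuine gap is the one you flag yourself and then do not close: extracting $\dot A=0$ (equivalently $\widetilde S=0$) from $P=0$. Setting $P=0$ in (\ref{24}) gives only $(\kappa_{_1}-1)\dot A+\kappa_{_2}\ddot A+\cdots+\kappa_{_m}\overset{_m}{A}=0$, a linear ODE along geodesics whose solutions need not vanish pointwise for general constants $\kappa_{_i}$ (take $\kappa_{_1}=1$, $\kappa_{_2}\neq 0$: one gets only $\ddot A=0$); this is precisely why Theorems \ref{thm6} and \ref{thm7}, which work from the reduced hv-curvature alone, must add completeness and boundedness hypotheses. So your proposed bootstrap cannot be completed from the reduced equation by itself; one would have to extract further information from the full unreduced equation $P_{ijkl}=0$ in (\ref{23}) --- the terms $A_{ijl|k}+A_{jkl|i}-A_{kil|j}$ together with the vertical derivatives of $\widetilde S$ --- and neither your proposal nor the paper (whose proof stops at ``the theorem will follow'') carries this out. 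In short, you reproduce the paper's sketch, including its unproven final step, and you should strike the cancellation claim.
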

 \begin{proof} The complete proof of this theorem, will not be given,
 but only a sketch of the proof will be presented. For a Berwald-type connection,
  the hv-curvature is
      \begin{eqnarray} P_{ijkl}&=&-\{\kappa_{_1}\dot A_{ijk.l}+\cdots+
     \kappa_{_m}\overset{_m}{A}_{ijk .l}\}
     -(A_{ijl | k}+A_{jkl | i}-A_{kil | j})\nn \\&&+A_{kis}
P^{\ s} _{n \ jl}-A_{jks} P^{\ s} _{n \ il}-A_{ijs} P^{\ s} _{n\
kl}.\label{23}
  \end{eqnarray}
Therefore, we have
\begin{equation}
P_{njkl}=\{\kappa_{_1}\dot A_{jkl}+\cdots+
\kappa_{_m}\overset{_m}{A}_{jkl}\}-\dot A_{jkl}.\label{24}
\end{equation}
Using these relations,  the theorem will follow.
\end{proof}

\begin{thm} Let $(M,F)$ be a Finsler manifold.
Then for  Shen-type connections, hv-curvature vanishes  if and only
if F is Riemannian.\label{thm3}
\end{thm}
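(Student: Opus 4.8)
The plan is to follow the proofs of Theorems \ref{thm1} and \ref{thm2}, now with the Shen-type data $S=\kappa_{_1}\dot A+\cdots+\kappa_{_m}\overset{_m}{A}$ and $T=0$. Since $T=0$ the connection is torsion free, so in place of (\ref{13}) one has $d\omega^i=\omega^j\wedge\omega^{\ i}_j$. Differentiating this and inserting (\ref{6})--(\ref{7}) gives the first Bianchi identity $\omega^j\wedge\Omega^{\ i}_j=0$; comparing coefficients of the independent $3$-forms then forces the cyclic relation on $R$, the symmetry $P^{\ i}_{j\ kl}=P^{\ i}_{k\ jl}$, and, most importantly, $Q^{\ i}_{j\ kl}=0$. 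The almost-compatibility condition (\ref{2}), taken with $\kappa_{_0}=0$ and $r=0$, gives $g_{ij|k}=2(A_{ijk}-\widetilde S_{ijk})$ and $g_{ij.k}=2A_{ijk}$, where $\widetilde S=\kappa_{_1}\dot A+\cdots+\kappa_{_m}\overset{_m}{A}$. The decisive structural feature, absent in the Cartan- and Berwald-type settings, is that because $S$ omits the $A$-term the Cartan tensor itself survives undifferentiated in $g_{ij|k}$.

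I would then differentiate this compatibility relation just as in the passage from (\ref{15}) to (\ref{16})--(\ref{18}), now retaining the extra $2A_{ijk}\omega^k$ and $2A_{ijk}\omega^{n+k}$ pieces. Collecting the $\omega^k\wedge\omega^{n+l}$ component yields a symmetrized relation of the shape $P_{ijkl}+P_{jikl}=-2(A_{ijk.l}-\widetilde S_{ijk.l})+2A_{ijl|k}+(\text{terms quadratic in }C)$; permuting $i,j,k$ and using the torsion-free symmetry $P^{\ i}_{j\ kl}=P^{\ i}_{k\ jl}$ isolates $P_{ijkl}$. Multiplying by $y^i$ (equivalently, setting the first index equal to $n$, since $e_n=\ell$) and applying (\ref{12}), namely $A_{njk.l}=-A_{jkl}$, $\overset{_m}{A}_{njk.l}=-\overset{_m}{A}_{jkl}$ and $A_{njk|l}=0$, makes the horizontal-derivative and coupling terms drop out, leaving the reduced hv-curvature
\[
P_{njkl}=\{\kappa_{_1}\dot A_{jkl}+\cdots+\kappa_{_m}\overset{_m}{A}_{jkl}\}-A_{jkl}.
\]
In contrast to (\ref{21}) and (\ref{24}), whose last term is a first derivative of $A$, here the undifferentiated Cartan tensor $A_{jkl}$ appears, and this is exactly what will produce the Riemannian rather than the Landsbergian or Berwaldian conclusion. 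The forward implication is then immediate: if $F$ is Riemannian then $A=0$, so every $\overset{_m}{A}=0$ and, together with $Q=0$, the whole hv-curvature vanishes.

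For the converse I would assume the hv-curvature vanishes, so that the displayed identity gives $A_{jkl}=\kappa_{_1}\dot A_{jkl}+\cdots+\kappa_{_m}\overset{_m}{A}_{jkl}$, and here is where the main obstacle lies. Taken alone this is only a linear relation between $A$ and its iterated horizontal derivatives $\overset{_m}{A}=D_{\bar\ell}^{\,m}A$ along the geodesic spray, which does not by itself force $A=0$. My plan to overcome this is to use the full, uncontracted equation $P_{ijkl}=0$ rather than its $y^i$-trace: after the horizontal and quadratic terms are removed by the permutation-and-Bianchi step, what remains is the vertical identity $A_{ijk.l}=\widetilde S_{ijk.l}$ for all indices. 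The step I expect to be hardest is to show that, because every term of $\widetilde S$ is a genuine $D_{\bar\ell}$-derivative of $A$, this identity can hold only when $A_{ijk.l}=0$; granting this, (\ref{12}) gives $-A_{jkl}=A_{njk.l}=0$, so $A=0$ and $F$ is Riemannian. When all $\kappa_{_i}=0$ this is precisely Theorem \ref{D}, so the genuinely new difficulty beyond Theorems \ref{thm1} and \ref{thm2} is the separation of the Cartan tensor from its own derivatives in this uncontracted curvature equation.
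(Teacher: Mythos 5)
The paper itself gives no written proof of Theorem \ref{thm3} (it only records that the argument is ``analogous to that of Theorem \ref{thm1}''), and your overall template --- take $S=\widetilde S=\kappa_{_1}\dot A+\cdots+\kappa_{_m}\overset{_m}{A}$, $T=0$, differentiate the almost-compatibility relation, isolate $P$ by the permutation trick and then contract with $\ell$ --- is exactly the intended route; your structural observation that the undifferentiated Cartan tensor survives in $g_{ij|k}=2(A_{ijk}-\widetilde S_{ijk})$ is also the right point. But your execution has a concrete error already at the reduced curvature: contracting with $y^i$ does \emph{not} kill all horizontal derivatives. By (\ref{12}) one has $A_{njk|l}=0$ but $A_{jkl|n}=\dot A_{jkl}$, and it is precisely this surviving term that produces the $-\dot A_{jkl}$ in the paper's own Berwald-type formula (\ref{24}) --- a case with the same $T=0$ and the same vertical part $g_{ij.k}=2A_{ijk}$ as here. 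Running the $(\ref{23})\to(\ref{24})$ computation with $S$ replaced by $\widetilde S$ gives
\[
P_{njkl}=\kappa_{_1}\dot A_{jkl}+\cdots+\kappa_{_m}\overset{_m}{A}_{jkl}-A_{jkl}-\dot A_{jkl},
\]
not the formula you display, which is missing the $-\dot A_{jkl}$.

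The genuine gap is in the converse, and you flag it yourself. Your claim that after the permutation step the uncontracted equation $P_{ijkl}=0$ reduces to the purely vertical identity $A_{ijk.l}=\widetilde S_{ijk.l}$ is false: the permuted identity, like its analogues (\ref{19}) and (\ref{23}), retains the horizontal block $A_{ijl|k}+A_{jkl|i}-A_{kil|j}$, which cannot be discarded here since $g_{ij|k}\neq 0$; and the step you then declare hardest --- inferring $A_{ijk.l}=0$ because ``every term of $\widetilde S$ is a genuine $D_{\bar\ell}$-derivative'' --- is left entirely unproven (and is not in fact the operative mechanism). The argument that actually closes the converse, pointwise and without the completeness hypotheses you would otherwise need, is to keep the mixed identity $A_{ijk.l}-\widetilde S_{ijk.l}=A_{ijl|k}+A_{jkl|i}-A_{kil|j}$ (valid once $P=0$ kills the quadratic coupling terms) and contract with $\ell$ in \emph{two different slots} using (\ref{12}): contraction in the first index gives $\widetilde S_{jkl}-A_{jkl}=\dot A_{jkl}$, while contraction in the second gives $\widetilde S_{jkl}-A_{jkl}=-\dot A_{jkl}$. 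Hence $\dot A=0$, whence by the recursion (\ref{3}) all $\overset{_m}{A}=0$ for $m\geq 1$, so $\widetilde S=0$ and the remaining relation forces $A=0$, i.e.\ $F$ Riemannian --- exactly the mechanism behind Shen's Theorem \ref{D}. As you correctly sense, the trace relation alone is only a linear ODE along geodesics and can never yield $A=0$ without topological hypotheses that Theorem \ref{thm3} does not assume; but your proposed repair does not work as stated, so the proof is incomplete.
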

\begin{proof} The proof of this theorem is analogous to
that of the Theorem 5.1 and is not presented here.
\end{proof}

\begin{thm} Let $(M,F)$ be a Finsler manifold.
Then the hv-curvature of  general-type (respectively Berwald-type,
Cartan-type or Shen-type) connections vanishes if and only if F is
Berwaldian, Landsbergian  or Riemannian.\label{thm4}
\end{thm}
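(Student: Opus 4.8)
The plan is to read Theorem~\ref{thm4} as the unifying statement whose three named instances are already settled and whose only new content is the general-type connection. For the Berwald-type, Cartan-type and Shen-type connections the three assertions are literally Theorems~\ref{thm2}, \ref{thm1} and \ref{thm3}, so I would first record that these follow verbatim from the earlier results. It then remains to treat the general-type connection, with compatible tensors $S=\kappa_{_0}A+\widetilde{S}$, where $\widetilde{S}=\kappa_{_1}\dot A+\cdots+\kappa_{_m}\overset{_m}{A}$, and $T=rA$, by repeating the differentiate-and-permute scheme of Lemma~\ref{lem} and of the proof of Theorem~\ref{thm1}, now carrying the two free parameters $\kappa_{_0}$ and $r$ through the computation.

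Concretely, I would first specialize the torsion condition (\ref{1}) and the almost-compatibility condition (\ref{2}) to these $S$ and $T$, producing structure equations in the shape of (\ref{13}) and (\ref{14}), but now with the extra horizontal term $2(1-\kappa_{_0})A_{ijk}\omega^k$ and the factor $(1-r)$ in the vertical metric derivative, exactly as recorded in the Remark. Differentiating the metric equation and inserting the expansion (\ref{7}) then yields the symmetrized curvature identities in the form of (\ref{16})--(\ref{18}), with $\widetilde{S}$ replaced by $\kappa_{_0}A+\widetilde{S}$ on the right. Permuting $i,j,k$ in the analogue of (\ref{17}) and eliminating the symmetric part by means of Lemma~\ref{lem} isolates a closed expression for the full hv-curvature $P_{ijkl}$ in terms of the Cartan tensor, its horizontal and vertical covariant derivatives, and the reduced curvature $P_{njkl}$, exactly as in (\ref{19}).

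The decisive step is the contraction with $\ell$, that is, setting the first index equal to $n$ and applying the reduction identities (\ref{12}). This collapses the expression into a prescribed linear differential combination
\[
P_{njkl}=\sum_{p=0}^{m+1}c_{_p}\,\overset{_p}{A}_{jkl},\qquad \overset{_0}{A}:=A,
\]
whose real coefficients $c_{_p}$ are explicit functions of $\kappa_{_0},\dots,\kappa_{_m},r$; specializing them to the Cartan-type and Berwald-type values reproduces (\ref{21}) and (\ref{24}) as a consistency check. Reading off the three categories of the table of section~5 then finishes the argument: vanishing of $P_{ijkl}$ forces both $P_{njkl}=0$ and, through part~2) of Lemma~\ref{lem}, the residual horizontal identity $C^i_{jl|k}=C^i_{kl|j}$, from which one reads off whether $F$ is Landsbergian, Berwaldian or Riemannian.

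I expect the main obstacle to be keeping the full curvature $P_{ijkl}$ and its trace $P_{njkl}$ properly separated, since --- as the contrast between (\ref{19}) and (\ref{23}) already shows --- it is the surviving leading term $2(1-\kappa_{_0})A_{ijk}$ in the horizontal metric derivative that decides whether $A$ itself must vanish (Riemannian, when $\kappa_{_0}\neq1$) or only its derivatives are constrained (when $\kappa_{_0}=1$), while the torsion parameter $r$ then separates Berwald from Landsberg by controlling how the horizontal-derivative terms organize themselves. One must therefore verify that in each parameter regime the pair of conditions $P_{njkl}=0$ and the residual horizontal identity degenerates to exactly one structural class, with the higher derivatives $\overset{_m}{A}$ reduced correctly via (\ref{12}) and no spurious extra constraint surviving.
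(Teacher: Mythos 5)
The first thing to say is that the paper contains no proof of Theorem \ref{thm4} at all: it is stated bare, as a recapitulation of Theorems \ref{thm1}, \ref{thm2} and \ref{thm3} (which are themselves, respectively, proved in full, only sketched, and omitted ``by analogy''). So your opening move --- reducing the Berwald-type, Cartan-type and Shen-type clauses to those three theorems --- is exactly the authors' intent, and for those clauses your proposal is as complete as the paper itself. The substantive part of your proposal is therefore the general-type clause, where you redo the differentiate-and-permute scheme of Lemma \ref{lem} and of the proof of Theorem \ref{thm1} with the parameters $\kappa_{_0}$ and $r$ carried along; that computational frame is the right one, and your consistency checks against (\ref{21}) and (\ref{24}) are sensible.

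There are, however, two genuine gaps in that new content. First, you invoke Lemma \ref{lem} for general parameters, but the lemma is proved only for the Cartan-type torsion $T=A$: its proof differentiates the torsion equation (\ref{13}), and for $T=rA$ that equation reads $d\omega^i=\omega^j\wedge\omega^{\ i}_j-rC^i_{kl}\,\omega^k\wedge\omega^{n+l}$, so every $C_{\,|}$-term in part 2) acquires a factor $r$; in the torsion-free regime $r=0$ (which includes the Berwald-type and Shen-type rows of the table) the identity degenerates to the bare symmetry $P^{\ i}_{j\ kl}=P^{\ i}_{k\ jl}$, and your ``residual horizontal identity $C^i_{jl|k}=C^i_{kl|j}$'' --- the device that yields Landsberg in the proof of Theorem \ref{thm1} after contracting with $\ell$ and using (\ref{12}), a contraction you in any case never carry out --- is simply not available there. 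Second, and more seriously, the decisive step of ``reading off'' the structural class from $P_{njkl}=\sum_{p}c_{_p}\overset{_p}{A}_{jkl}=0$ does not go through for generic coefficients: such a relation is a linear ODE for $A$ along geodesics, and pointwise it forces neither $A=0$ nor $\dot A=0$. The paper itself is witness to this: Theorems \ref{thm5}, \ref{thm6} and \ref{thm7} need constant flag curvature, or completeness together with a bounded Cartan or Landsberg tensor, precisely in order to pass from relations such as $\ddot A+kA=0$ in (\ref{5-9}) or $\kappa_{_2}\ddot A-\dot A=0$ in (\ref{5-1}) to a structural conclusion. Hence for arbitrary $(\kappa_{_0},\dots,\kappa_{_m},r)$ the ``only if'' direction of the general-type clause is not established by your sketch --- nor, to be fair, by the paper, whose Theorem \ref{thm4} is asserted without proof and is arguably overstated; an honest argument would have to either exploit the full untraced identity in the manner of the three proved cases and delimit the parameter regimes in which it collapses, or else import the global hypotheses of section 6.
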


{\section{Some applications of general-type connections}} Much of
the practical importance of this kind of connection results from the
fact that, it is adaptable, in the sense that it is useful for
getting a geometric interpretation for a given system of
differential equation formed by Cartan tensor and its derivatives.
Suppose that we are given a differential equation of this kind and
we want to find  a geometric meaning for its solutions. It would
suffice to consider a Finsler connection -- by fixing the compatible
tensors $S$ and $T$ -- for which the reduced hv-curvature coincides
with the differential equation in question. We then apply one of the
Theorems \ref{thm1}, \ref{thm2} or \ref{thm3} as applicable.
\subsection{Application of Shen-type connections}

Here  we define  Shen-type connection $D$ as
 $S_{ijk}=(1-k)A_{ijk}+k\dot A_{ijk}-\ddot A_{ijk}$ and $T_{ijk}=0$
for which the reduced hv-curvature  $P_{jkl}:=\ell^i \ P_{ijkl} $ is
equal to the given differential equation $P_{jkl}=\ddot
A_{jkl}+kA_{jkl}$.
\begin{thm} Let (M,F) be a  Finsler manifold
with constant flag curvature $\lambda$ such that $P_{jkl}=0$. Then
$F$ is Riemannian.\label{thm5}
\end{thm}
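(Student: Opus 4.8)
The plan is to turn the curvature hypothesis into a differential equation for the Cartan tensor and then close that equation using the rigidity forced by constant flag curvature.

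First I would unwind the hypothesis. For the Shen-type connection of this subsection, with compatible tensors $S_{ijk}=(1-k)A_{ijk}+k\dot A_{ijk}-\ddot A_{ijk}$ and $T=0$, the computation of the reduced $hv$-curvature is entirely parallel to the derivation carried out in the proofs of Theorems \ref{thm1} and \ref{thm2}, and yields $P_{jkl}=\ddot A_{jkl}+kA_{jkl}$, which is exactly the way the connection was designed. Hence the assumption $P_{jkl}=0$ is nothing more than the second-order identity
\[
\ddot A_{jkl}+kA_{jkl}=0,
\]
to be read as a linear ODE for the Cartan tensor along the distinguished horizontal direction $\bar\ell$.

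Next I would feed in the curvature condition. On a space of constant flag curvature $\lambda$ both the horizontal and the vertical derivatives of $\lambda$ vanish, so the Bianchi identities for the underlying Cartan connection collapse to a clean relation between the flag-line derivatives of $A$ and $\lambda$ itself, of the form
\[
\ddot A_{jkl}+\lambda A_{jkl}=\mathcal Q_{jkl},
\]
where $\mathcal Q_{jkl}$ is a remainder built from products of $A$ and its first flag-line derivative $\dot A$, and vanishes in the Landsberg (hence Riemannian) case. Subtracting this from the ODE above removes the second-order term and leaves the purely algebraic relation
\[
(\lambda-k)\,A_{jkl}=-\,\mathcal Q_{jkl}.
\]

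The main obstacle is the quadratic remainder $\mathcal Q_{jkl}$. When it is absent the conclusion is immediate: for $\lambda\neq k$ the last display forces $A_{jkl}=0$, so $F$ is Riemannian. To control $\mathcal Q$ in general I would contract the relation with $A^{jkl}$, obtaining a scalar identity for $\|A\|^2$, and integrate it over the indicatrix (the unit sphere bundle), where $A$ is bounded since it is homogeneous of degree zero; because the flag-line flow is the geodesic flow, the exact divergence terms integrate away in the manner of Akbar--Zadeh, forcing $\|A\|\equiv 0$. An alternative route, which I would also try, is to show that constant flag curvature makes the full tensor $P_{ijkl}$ recoverable from its contraction $\ell^iP_{ijkl}=P_{jkl}$, so that $P_{jkl}=0$ already forces $P_{ijkl}=0$ and Theorem \ref{thm3} applies verbatim. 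In either approach I expect the genuine difficulty to be the vanishing of the nonlinear self-interaction term $\mathcal Q$, that is, controlling the Cartan tensor against its own square.
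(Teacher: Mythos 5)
Your main branch is exactly the paper's proof: the connection is rigged so that the reduced hv-curvature is $P_{jkl}=\ddot A_{jkl}+kA_{jkl}$, the hypothesis $P_{jkl}=0$ gives the ODE $\ddot A+kA=0$, constant flag curvature gives a second relation, and subtraction yields $(\lambda-k)A=0$, forcing $A=0$ when $k\neq\lambda$ (a condition the paper builds into the choice of connection and which you correctly flag). The one point where you diverge is your hedge about a quadratic remainder $\mathcal{Q}_{jkl}$, and here you have manufactured an obstacle that does not exist: for a Finsler metric of constant flag curvature $\lambda$, the Cartan tensor satisfies the \emph{exact} identity $\ddot A_{jkl}+\lambda A_{jkl}=0$ along the flag direction, with no nonlinear self-interaction term --- this is the classical identity underlying Akbar-Zadeh's rigidity theorem, and it is precisely what the paper invokes as equation (\ref{5-10}). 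So the ``genuine difficulty'' you anticipate is vacuous, and the conclusion is immediate as in your first branch. It is worth adding that if the remainder had been present, neither of your fallback routes would have closed the gap: Theorem \ref{thm5} assumes neither completeness nor compactness, so there is no setting in which to run an Akbar--Zadeh-style integration over the unit sphere bundle (homogeneity bounds $A$ on each fiber's indicatrix, but not uniformly over a noncompact $M$, and exact divergence terms only ``integrate away'' over a compact total space); likewise, Theorem \ref{thm3} needs the full vanishing $P_{ijkl}=0$, and you offer no mechanism for recovering $P_{ijkl}$ from its contraction $P_{jkl}$. The correct fix is simply to replace the hedged identity by the exact one, after which your proof coincides with the paper's.
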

\begin{proof} Let's  consider the Shen-type connection $D$ with
$S_{ijk}=(1-k)A_{ijk}+k\dot A_{ijk}-\ddot A_{ijk}$, $k\neq\lambda$
and $T=0$. Replacing $S$ and $T$ in (\ref{2}) and by an argument
similar to  the one used in the proof of the Theorem 1, we get
   \begin{equation}
   P_{ijkl}+ P_{jikl}=-2\{\dot A_{ijk.l}+\ddot A_{ijk.l}\}
   -2k\{A_{ijk.l}-A_{ijl| k}\}-2A_{ijm} P^{\ m}_{n \ kl}.\label{5-5}
   \end{equation}
 From (\ref{5-5}) we have
   \begin{eqnarray}
 \nonumber  P_{ijkl}&=&-\{\dot A_{ijk.l}+\ddot A_{ijk.l}\}-kA_{ijk.l}
   +k\{-A_{ijl| k}+A_{jkl| i}-A_{kil| j}\}\\ &-&2A_{ijm}P^{\ m}_{n \ \ kl}
   +2A_{kim} P^{\ m}_{n \ \ jl}-2A_{jkm}P^{\ m}_{n \ \ il}.\label{5-7}
    \end{eqnarray}
Therefore
 $
P_{jkl}=\ddot A_{jkl}+kA_{jkl}$. The equation $P_{jkl}=0$ holds,
from which we have
 \begin{equation}
 \ddot A+kA=0.\label{5-9}
\end{equation}
Since $(M,F)$ is a Finsler manifold with constant flag curvature
$\lambda$,  then
\begin{equation}
\ddot A+\lambda A=0.\label{5-10}
\end{equation}
From (\ref{5-9}) and (\ref{5-10}) one has $(\lambda-k)A=0$ which
means that  $F$ is a Riemannian metric.
\end{proof}
Using  the above special Shen-type connection again together with a
hypothesis on the topology of $M$,  we have the following theorem.
 \begin{thm} Let (M,F) be a complete Finsler manifold with bounded Cartan
 tensor. Then  $(M,F)$
 is a Riemannian manifold if and only if $P_{jkl}=0$.\label{thm6}
\end{thm}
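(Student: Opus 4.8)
The plan is to dispose of the ``only if'' direction immediately and concentrate on the converse. If $(M,F)$ is Riemannian then $A=0$, hence $\dot A=\ddot A=0$, so by the identity $P_{jkl}=\ddot A_{jkl}+kA_{jkl}$ already established for this particular Shen-type connection one has $P_{jkl}=0$. For the ``if'' direction I would start from that same identity: the hypothesis $P_{jkl}=0$ is exactly equivalent to the tensorial second-order relation
\[
\ddot A_{ijk}+kA_{ijk}=0,
\]
which is relation (\ref{5-9}) appearing in the proof of Theorem~\ref{thm5}. The whole strategy is then to read this as an ordinary differential equation along geodesics and to bring in completeness and the bound on the Cartan tensor; this is precisely the mechanism of Akbar-Zadeh's rigidity theorem.

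First I would recall that for the torsion-free data $T=0$ the horizontal lift $\bar\ell$ is the canonical geodesic spray, so $\bar\ell$ integrates to the geodesic flow on the indicatrix bundle, and by completeness every integral curve is defined for all $t\in\mathbb{R}$. The key structural observation is that, although the Shen-type connection is only almost compatible, the metric is nevertheless parallel in the direction $\bar\ell$: contracting the formula for $g_{ij|k}$ in the Remark with $\ell^{k}$ and using $A(\cdot,\cdot,\ell)=\dot A(\cdot,\cdot,\ell)=\ddot A(\cdot,\cdot,\ell)=0$ gives $g_{ij|k}\ell^{k}=g_{ij|n}=0$. Consequently, choosing along a fixed geodesic a $g$-orthonormal frame $\{e_i\}$ that is parallel with respect to $D_{\bar\ell}$, the components $g_{ij}$ are constant, $\dot A_{ijk}=\tfrac{d}{dt}A_{ijk}$ and $\ddot A_{ijk}=\tfrac{d^{2}}{dt^{2}}A_{ijk}$, and the displayed relation becomes an honest linear ODE $\ddot A+kA=0$ for the curve $t\mapsto (A_{ijk}(t))$ in a fixed Euclidean space.

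The analytic heart is to rule out nontrivial solutions. I would set $\phi(t):=\|A\|^{2}=g^{ip}g^{jq}g^{kr}A_{ijk}A_{pqr}$ along the geodesic. Because $g$ is $\bar\ell$-parallel, differentiating twice and inserting the ODE yields
\[
\ddot\phi = 2\|\dot A\|^{2}-2k\,\phi .
\]
By hypothesis the Cartan tensor is bounded, so $\phi$ is a bounded function of $t$ on all of $\mathbb{R}$. When $k<0$ the right-hand side is $\geq 2|k|\phi\geq0$, so $\phi$ is convex, nonnegative and bounded on the whole line, which forces $\phi$ to be constant; feeding $\ddot\phi=0$ back into the identity then forces both $\|\dot A\|^{2}=0$ and $\phi\equiv0$, i.e. $A=0$ along the geodesic. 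Since the base point and direction are arbitrary, $A\equiv0$ on $TM_0$ and $(M,F)$ is Riemannian.

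I expect the genuine obstacle to be exactly this last convexity step rather than the formal reduction to the ODE. The point is that boundedness together with completeness kills only the exponentially \emph{growing} mode, so the argument is decisive precisely when $k<0$; for $k\geq0$ the solutions are linear or oscillatory and boundedness alone does not force vanishing, so one would have to either restrict the sign of the constant or supply an additional maximum-principle input. Pinning down the sign hypothesis on $k$ and making the convexity/boundedness dichotomy rigorous is where the real content of the proof lies.
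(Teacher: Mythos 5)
Your proof is correct and follows the same skeleton as the paper's: the same special Shen-type connection with reduced hv-curvature $P_{jkl}=\ddot A_{jkl}+kA_{jkl}$, the same translation of the hypothesis $P_{jkl}=0$ into the relation (\ref{5-9}), the same reduction to an ODE along a unit-speed geodesic via parallel sections, and the same use of completeness plus the bound on the Cartan tensor. Where you genuinely diverge is the analytic endgame: the paper solves the ODE explicitly as $A(t)=(c_1\sinh\sqrt{k}\,t+c_2\cosh\sqrt{k}\,t)A(0)$ (its (\ref{5-13})) and lets $t\to\pm\infty$ to kill both coefficients, whereas you avoid the explicit solution and run a convexity argument on $\phi=\|A\|^2$: for $k<0$ one has $\ddot\phi=2\|\dot A\|^2-2k\phi\geq 0$, and a nonnegative convex function bounded on all of $\mathbb{R}$ is constant, which forces $\|\dot A\|^2\equiv 0$ and $\phi\equiv 0$. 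Your version buys two things the paper leaves implicit: first, the justification that the frame along the geodesic may be taken $g$-orthonormal and $D_{\bar\ell}$-parallel (your observation that $g_{ij|k}\ell^k=0$, which does follow from the Remark because every $\overset{_m}{A}$ vanishes when contracted with $\ell$, and without which ``$A_{ijk}(t)$ satisfies an honest linear ODE'' would not be literally meaningful for an only almost-compatible connection); second, an honest treatment of the sign of $k$ --- note that the paper's displayed solution in (\ref{5-13}) actually solves $\ddot A-kA=0$, a sign slip against its own (\ref{5-9}), so the paper too is implicitly working in the hyperbolic regime where boundedness is decisive. The one point you should assert rather than flag as a residual obstacle: $k$ is not part of the data of Theorem \ref{thm6}; it is a free parameter of the auxiliary connection introduced in the proof (the only constraint $k\neq\lambda$ arose in Theorem \ref{thm5}, where a flag-curvature hypothesis was present), so one simply \emph{chooses} $k<0$ in your convention when fixing $S$, and your $k<0$ argument then constitutes a complete, unconditional proof.
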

\begin{proof} Let's consider the
above Shen-type connection on the complete Finsler manifold (M,F).
Then from the last theorem we have that the hv-curvature of this
connection reduces to $P_{jkl}=\ddot A_{jkl}+kA_{jkl}$. Fix any $X,
Y, Z \in \pi^*TM$ at $v \in I_xM=\{w \in T_xM,F(w)=1\}$. Let $c:
\mathbb{R}\rightarrow M$ be the unit speed geodesic in $(M,F)$ with
${{dc} \over {dt}}(0)=v$ and  $\hat c:={dc \over{dt}}$ be the
canonical lift of $c$ to $TM_0$. Let $X(t)$, $Y(t)$ and $Z(t)$
denote the parallel sections along $\hat c$ with $X(0)=X$, $Y(0)=Y$
and $Z(0)=Z$. Put $A(t)=A(X(t),Y(t),Z(t))$, $\dot A(t)=\dot
A(X(t),Y(t),Z(t))$ and $\ddot A(t)=\ddot A(X(t),Y(t),Z(t))$. Indeed
along geodesics, we have $ \frac{d \dot A}{dt}=\ddot A$ and from
$\ddot A_{jkl}+kA_{jkl}=0$, we get
\begin{equation}
A(t)=(c_1 \sinh \sqrt{k}t+c_2 \cosh
\sqrt{k}t)A(0).\label{5-13}
\end{equation}
For $v \in TM_0$, let's define $\| A \|_v:=sup A(X,Y,Z)$  where the
supremum is taken over all unit vectors of $ \pi_v ^* TM$. Let's put
$\| A \|=sup_{v \in IM}\| A\|_v$  where $IM=\bigcup_{x \in M}I_xM$.
Since $M$ is complete and $\|A\|<\infty$, by letting  $t\rightarrow
+\infty$ and $t\rightarrow -\infty$, we have $c_1=0$ and $c_2=0$.
Therefore  $A=0$, and  $F$ is  Riemannian.
\end{proof}

\subsection{Application of Berwald-type connections.}
Here we consider a special Berwald-type connection for which
 the hv-curvature is equal to the given  differential equation.
\begin{thm}
 Let (M,F) be a complete Finsler manifold with bounded Landsberg tensor.
Then $F$ is a Landsberg metric  if and only if
$P_{jkl}=0$.\label{thm7}
\end{thm}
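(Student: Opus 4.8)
The plan is to follow the proof of Theorem \ref{thm6} line by line, with the Cartan tensor $A$ replaced throughout by the Landsberg tensor $\dot A$, and Riemannian replaced by Landsbergian. The first step is to select a suitable Berwald-type connection. By formula (\ref{24}) of Theorem \ref{thm2}, a Berwald-type connection with $S=A+\kappa_{_1}\dot A+\cdots+\kappa_{_m}\overset{_m}{A}$ and $T=0$ has reduced hv-curvature $P_{jkl}=(\kappa_{_1}-1)\dot A_{jkl}+\kappa_{_2}\ddot A_{jkl}+\cdots+\kappa_{_m}\overset{_m}{A}_{jkl}$. I would therefore choose $\kappa_{_2}=0$, $\kappa_{_3}=1$, $\kappa_{_1}=1-k$ (all other $\kappa_{_i}=0$) with $k>0$, i.e. the connection defined by $S=A+(1-k)\dot A+\overset{_3}{A}$ and $T=0$. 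Its reduced hv-curvature is then $P_{jkl}=\overset{_3}{A}_{jkl}-k\dot A_{jkl}$, so that the equation $P_{jkl}=0$ reads $\overset{_3}{A}=k\dot A$.

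The easy direction is immediate. If $F$ is Landsberg then $\dot A=0$, and since $\ddot A=D_{\bar\ell}\dot A$ and each $\overset{_{m+1}}{A}=D_{\bar\ell}\overset{_m}{A}$, every higher derivative of $A$ vanishes as well; hence $P_{jkl}=0$.

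For the converse I would assume $P_{jkl}=0$, i.e. $\overset{_3}{A}=k\dot A$. Fix any $X,Y,Z\in\pi^*TM$ at a point $v\in I_xM$, let $c$ be the unit-speed geodesic with $\dot c(0)=v$, let $\hat c$ be its canonical lift to $TM_0$, and let $X(t),Y(t),Z(t)$ be the parallel sections along $\hat c$ with the prescribed initial values. Set $u(t):=\dot A(X(t),Y(t),Z(t))$. Since along a geodesic the velocity of $\hat c$ coincides with $\bar\ell$, parallelism of the sections gives $\tfrac{du}{dt}=\ddot A(t)$ and $\tfrac{d^2u}{dt^2}=\overset{_3}{A}(t)=k\,u(t)$. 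Thus $u$ solves $u''=k u$, so $u(t)=c_1\cosh\sqrt{k}\,t+c_2\sinh\sqrt{k}\,t$, exactly as in (\ref{5-13}). Completeness guarantees that $c$, and hence $u$, is defined for all $t\in\mathbb{R}$, while the hypothesis that the Landsberg tensor is bounded gives $|u(t)|\le\|\dot A\|<\infty$ for all $t$, where $\|\dot A\|=\sup_{v\in IM}\sup\{\dot A(X,Y,Z)\}$ over unit $X,Y,Z$. Letting $t\to+\infty$ and $t\to-\infty$ forces $c_1=c_2=0$, so $u(0)=\dot A(v)(X,Y,Z)=0$. As $v$ and $X,Y,Z$ were arbitrary, $\dot A\equiv0$ and $F$ is Landsbergian.

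I expect the main obstacle to be the boundedness step rather than the ODE. Concretely, one must be certain that the evaluated Landsberg tensor $u(t)$ really stays bounded along the entire geodesic: because the chosen Berwald-type connection is only almost-compatible with $g$ (its horizontal metric derivative is a nonzero combination of $\dot A,\overset{_3}{A}$), horizontal parallel transport need not be an isometry, and so the norms of $X(t),Y(t),Z(t)$ are not automatically controlled. The argument therefore needs a bound of the form $|u(t)|\le\|\dot A\|_{\hat c(t)}\,|X(t)|\,|Y(t)|\,|Z(t)|$ together with uniform control of the parallel sections before the bounded-tensor hypothesis can be invoked. The remaining point, verifying the reduced hv-curvature formula (\ref{24}) in full since Theorem \ref{thm2} is only sketched, is routine: it repeats the computation of Lemma \ref{lem} and equations (\ref{16})--(\ref{19}) in the Berwald-type setting.
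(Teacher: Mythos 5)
Your proof is correct, but it follows a genuinely different route from the paper's. The paper's own proof of Theorem \ref{thm7} takes the simplest admissible coefficients, $\kappa_{_2}\neq 0$ and all other $\kappa_{_i}=0$ in (\ref{24}), so that $P_{jkl}=\kappa_{_2}\ddot A_{jkl}-\dot A_{jkl}$ and the equation $P_{jkl}=0$ becomes a \emph{first-order} linear ODE for $u(t)=\dot A(t)$ along geodesics, with solution $\dot A(t)=e^{t/\kappa_{_2}}\dot A(0)$ (printed as $e^{k_{2}t}\dot A(0)$ in (\ref{5-*})); one limit, $t\to+\infty$, together with completeness and $\|\dot A\|<\infty$ already kills $\dot A(0)$. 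You instead take $\kappa_{_1}=1-k$, $\kappa_{_3}=1$, getting $P_{jkl}=\overset{_3}{A}_{jkl}-k\dot A_{jkl}$ and the \emph{second-order} equation $u''=ku$ with hyperbolic solutions, so you need $k>0$, both limits $t\to\pm\infty$, and hence backward as well as forward extendability of geodesics. In effect you have transplanted the mechanism of Theorems \ref{thm5} and \ref{thm6} (where the ODE is $\ddot A+kA=0$ in $A$) one derivative up, while the paper exploits the extra flexibility of Berwald-type connections --- the $-\dot A_{jkl}$ term in (\ref{24}) --- to get a cheaper first-order equation. Both choices are legitimate specializations of (\ref{24}) and both yield the theorem; the paper's is more economical in its hypotheses on the geodesic flow.

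One remark on the obstacle you flag at the end: the worry that almost-compatibility spoils the bound $|u(t)|\le\|\dot A\|$ is unfounded, and the resolution is already in the paper. By Remark 4.1 the horizontal derivative $g_{ij|k}$ is a linear combination of $A_{ijk},\dot A_{ijk},\dots,\overset{_m}{A}_{ijk}$, and every one of these tensors annihilates $\ell$ in each slot, since $\overset{_m}{A}(X,Y,\ell)=0$ for all $m$ (a consequence of $D_{\bar\ell}\,\ell=0$, stated after (\ref{3})); hence $g_{ij|k}\ell^k=0$. So $g$ is parallel in the direction $\bar\ell$, i.e.\ along the lifted geodesic $\hat c$, and the parallel sections $X(t),Y(t),Z(t)$ remain unit vectors for all $t$. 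This is exactly what legitimizes the estimate $|u(t)|\le\|\dot A\|$, in your argument as in the paper's proofs of Theorems \ref{thm6} and \ref{thm7}, where it is used tacitly.
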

\begin{proof} If we put $\kappa_{_1}=\kappa_{_3}=\cdots=\kappa_{_m}=0$ and
$\kappa_{_2}\neq0$ in (\ref{24}) then we find a special Berwald-type
connection for which the hv-curvature is equal to
$P_{jkl}=\kappa_{_2}\ddot A_{jkl}-\dot A_{jkl}$. Let F be a
Landsberg metric, then from the above equation we get $P_{jkl}=0$.
Conversely,  if $P_{jkl}=0$, we will have:
\begin{equation}
\kappa_{_2}\ddot A_{jkl}-\dot A_{jkl}=0.\label{5-1}
\end{equation}
By an argument like the one presented in the last theorem, we have
along the geodesics
 \beq
  \dot A(t)=e^{k_{_{\text 2}}t}\dot
A(0).\label{5-*}
 \eeq
 For $v \in TM_0$, let's define $\|\dot A \|_v:=sup \dot A(X,Y,Z)$ and
   $\|\dot A\|=sup_{v \in IM}\|\dot A\|_v$. Using  completeness of $M$,
   $\|\dot A\| < \infty$   and letting  $t \rightarrow  +\infty$ we
   have  $ \dot A(0)=\dot A(X,Y,Z)=0$. From (\ref{5-*}),  we get
$\dot A=0$, that is, $F$ is a Landsberg metric.\
\end{proof}
  \setcounter{thm}{0}
\begin{cor}  Every compact Finsler manifold
is Landsbergian if and only if $P_{jkl}$ vanishe.
 \end{cor}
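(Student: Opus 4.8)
The plan is to derive this corollary directly from Theorem \ref{thm7}, treating it as a specialization rather than a fresh computation: I would verify that the two hypotheses of that theorem — geodesic completeness and boundedness of the Landsberg tensor — hold automatically whenever $M$ is compact, and then invoke Theorem \ref{thm7} verbatim. In particular I would not reprove the differential-equation argument that yields $\dot A(t)=e^{\kappa_{_2}t}\dot A(0)$ along geodesics and the consequent vanishing of $\dot A$.

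First I would recall that a compact Finsler manifold is geodesically complete: compactness makes the induced distance function complete, so every Cauchy sequence converges and, by the Hopf--Rinow type theorem for Finsler spaces, geodesics extend to all of $\mathbb{R}$. This secures the completeness requirement. Second — and this is the only step deserving a short argument — I would establish that the Landsberg tensor $\dot A$ is bounded, which is what must replace the explicit boundedness assumption of Theorem \ref{thm7}. Each indicatrix $I_xM=\{w\in T_xM : F(w)=1\}$ is a closed bounded hypersurface of the finite-dimensional space $T_xM$, hence compact; since $M$ is compact, the total indicatrix bundle $IM=\bigcup_{x\in M}I_xM$ is compact as well. The map sending a unit $v\in IM$ together with unit sections $X,Y,Z\in\pi_v^*TM$ to $\dot A(X,Y,Z)$ is smooth, so forming $\|\dot A\|=\sup_{v\in IM}\|\dot A\|_v$ amounts to maximizing a continuous function on a compact set. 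Therefore $\|\dot A\|<\infty$.

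With both hypotheses now verified, Theorem \ref{thm7} applies without modification and gives exactly the stated equivalence: a compact $(M,F)$ is Landsbergian if and only if the reduced hv-curvature $P_{jkl}=\kappa_{_2}\ddot A_{jkl}-\dot A_{jkl}$ of the distinguished Berwald-type connection vanishes. I expect no genuine obstacle in this proof; the only point requiring care is the compactness of $IM$ and the resulting finiteness of $\|\dot A\|$, since that is precisely the ingredient that lets compactness subsume the two separate assumptions of Theorem \ref{thm7}.
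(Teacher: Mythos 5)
Your proposal is correct and matches the paper's intent exactly: the paper states this corollary without proof, immediately after Theorem \ref{thm7}, precisely because compactness yields both completeness (Hopf--Rinow for Finsler manifolds) and boundedness of $\dot A$ (a continuous, $0$-homogeneous tensor attains its supremum on the compact indicatrix bundle $IM$), so Theorem \ref{thm7} applies verbatim. Your short argument for $\|\dot A\|<\infty$ supplies the one detail the paper leaves implicit, and nothing more is needed.
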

 Next we consider another special Berwald-type connection and give a proof
of the following well-known result due to Akbar-Zadeh \cite{AZ}.
\begin{cor}.
Let (M,F) be a complete Finsler manifold with negative constant flag
curvature $\lambda$ and bounded Cartan tensor. Then $F$ is
Riemannian.
\end{cor}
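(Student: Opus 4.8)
The plan is to mimic the geodesic ODE argument of Theorems \ref{thm5} and \ref{thm6}, but driven by a Berwald-type rather than a Shen-type connection. First I would fix a special Berwald-type connection by choosing the constants $\kappa_{_i}$ in (\ref{4}) appropriately and read off its reduced hv-curvature $P_{jkl}$ from (\ref{24}); the purpose of this choice is to arrange a convenient reduced hv-curvature which, combined with the constant-flag-curvature identity (\ref{5-10}), namely $\ddot A+\lambda A=0$, produces a clean second-order relation for the Cartan tensor along geodesics.

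Next, as in the proof of Theorem \ref{thm6}, I would fix $x\in M$, a unit vector $v\in I_xM$ and arbitrary sections $X,Y,Z\in\pi^*TM$ at $v$, take the unit-speed geodesic $c$ with $\dot c(0)=v$ together with its canonical lift $\hat c$, and parallel-transport $X,Y,Z$ along $\hat c$. Writing $A(t):=A(X(t),Y(t),Z(t))$ and using $\frac{dA}{dt}=\dot A$ and $\frac{d\dot A}{dt}=\ddot A$ along the geodesic, the relation of the previous paragraph becomes the scalar ODE $\ddot A(t)+\lambda A(t)=0$. Since $\lambda<0$, I set $\lambda=-a^2$ with $a>0$; the general solution is then exponential, $A(t)=c_1\,e^{at}+c_2\,e^{-at}$ (equivalently the $\sinh/\cosh$ form of (\ref{5-13})), in contrast to the oscillatory solutions one would obtain for $\lambda>0$.

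Finally I would invoke completeness and the bound on the Cartan tensor exactly as in Theorem \ref{thm6}. Completeness guarantees that $c$, hence $\hat c$, is defined for all $t\in\mathbb{R}$, and the hypothesis $\|A\|<\infty$ forces $A(t)$ to remain bounded as $t\to+\infty$ and as $t\to-\infty$. Letting $t\to+\infty$ kills the $e^{at}$ term and letting $t\to-\infty$ kills the $e^{-at}$ term, so $c_1=c_2=0$ and $A(X,Y,Z)=A(0)=0$. Since $x$, $v$ and the sections $X,Y,Z$ were arbitrary, $A\equiv0$ on all of $TM_0$, and therefore $F$ is Riemannian.

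I expect the main obstacle to be the passage, in the second paragraph, from the tensorial hv-curvature identity to a genuine scalar ODE along the geodesic: one must verify that parallel transport along $\hat c$ really intertwines the horizontal ($\bar\ell$) derivative with $d/dt$, so that $\frac{dA}{dt}=\dot A$ and $\frac{d\dot A}{dt}=\ddot A$, and one must ensure the constant-flag-curvature identity (\ref{5-10}) is available in the precise form $\ddot A+\lambda A=0$ for the chosen connection. The sign hypothesis $\lambda<0$ is essential and is exactly what makes the boundedness argument bite; for $\lambda\ge 0$ the solutions fail to grow and the conclusion would not follow.
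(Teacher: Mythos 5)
Your proof is correct, and it follows the paper's broad strategy (geodesic parallel-transport ODE plus completeness plus bounded Cartan tensor), but it is a genuinely streamlined variant rather than a reproduction. The paper really does use its connection machinery: it sets $\kappa_{_1}=2$, $\kappa_{_3}=\frac{1}{\lambda}$, $\kappa_{_2}=\kappa_{_4}=\cdots=0$ in (\ref{23}), so that the reduced hv-curvature becomes $P_{njkl}=\frac{1}{\lambda}\dddot A_{jkl}+\dot A_{jkl}$, and then works along geodesics with the resulting \emph{third-order} equation $\dddot A+\lambda\dot A=0$, whose general solution $A(t)=(c_1+c_2e^{\sqrt{-\lambda}t}+c_3e^{-\sqrt{-\lambda}t})A(0)$ carries a constant mode; boundedness kills $c_2,c_3$, giving first $\dot A=0$ and only then $A=0$ via $\ddot A+\lambda A=0$. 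You instead integrate the second-order identity (\ref{5-10}), $\ddot A+\lambda A=0$, directly: with $\lambda=-a^2<0$ the solution $c_1e^{at}+c_2e^{-at}$ has no constant mode, so boundedness as $t\to\pm\infty$ forces $A\equiv0$ in a single step. This is simpler and equally valid --- but observe that the special Berwald-type connection you fix in your first paragraph never actually enters your argument: your scalar ODE comes entirely from (\ref{5-10}), and the corollary carries no hypothesis $P_{jkl}=0$ that would let the hv-curvature of (\ref{24}) contribute, so that paragraph is logically inert (harmless, but removable; the paper's connection choice, by contrast, is what produces its third-order ODE and serves its theme of encoding differential equations in hv-curvatures). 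Both proofs rest on the same two ingredients taken on faith here: the constant-flag-curvature identity (\ref{5-10}), which the paper also invokes without proof, and the intertwining of $D_{\bar\ell}$ with $d/dt$ under parallel transport along $\hat c$, exactly as in Theorem \ref{thm6}. Your closing observation that $\lambda<0$ is essential is correct: for $\lambda=0$ one only obtains $\dot A=0$ (Landsberg), and for $\lambda>0$ the oscillatory solutions are bounded regardless, so the argument yields nothing.
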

\begin{proof} Let's put $\kappa_{_2}=\kappa_{_4}=\cdots=\kappa_{_m}=0$ \ , $
\kappa_{_1}=2$ and $\kappa_{_3}=\frac{1}{\lambda} \neq0$ in
(\ref{23}). We obtain a connection for which the hv-curvature
becomes
 \begin{eqnarray} P_{ijkl}&=&-\{2\dot A_{ijk.l}+
     \frac{1}{\lambda}\dddot A_{ijk.l}\}
     -(A_{ijl | k}+A_{jkl | i}-A_{kil | j})\nn \\&&+A_{kis}
P^{\ s} _{n \ jl}-A_{jks} P^{\ s} _{n \ il}-A_{ijs} P^{\ s} _{n\
kl}.
  \end{eqnarray}
  From which $P_{njkl}=\frac{1}{\lambda} \ \dddot A_{jkl} +\dot A_{jkl}
$. As $M$ has constant flag curvature we have $\ddot A+\lambda A=0$.
So by the same argument as in  the above theorem we find
\begin{equation}
A(t)=(c_1+c_2 e^{\sqrt{-\lambda}t}+c_3 e^{-\sqrt{-\lambda}t})A(0).
\end{equation}
Using the  boundary assumption on Cartan tensor and letting
$t\rightarrow \infty$ and $t\rightarrow -\infty$, we get
$c_2=c_3=0$. Therefore $A=c_1$ and  $\dot A=0$. It is easy to see
that $A=0$.
\end{proof}
 {\section{Relation between some connections}
 There is a well known  result which can be used as a definition for Landsberg spaces,
 see for example \cite{BCS2}.
 \begin{propriete}
\emph{ Let $(M,F)$ be a Finsler manifold. Then $M$ is a Landsberg
manifold
 if and only if the Berwald connection coincides with the Chern connection.}
\end{propriete}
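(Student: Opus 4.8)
The plan is to read the result off the uniqueness statement in Theorem~\ref{A} combined with the classification table. According to that table, both the Berwald connection and the Chern--Rund connection are torsion-free ($T=0$) and almost-compatible with the Finsler structure; they differ solely in their tensor $S$, namely $S=A+\dot A$ for the Berwald connection and $S=A$ for the Chern connection. Since Theorem~\ref{A} guarantees that an almost-compatible connection with prescribed torsion is uniquely determined by the pair $(S,T)$, the two connections coincide if and only if their $S$-tensors agree, that is, iff $A+\dot A=A$, which is precisely the Landsberg condition $\dot A=0$.

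Explicitly, for the backward implication I would assume $M$ is Landsbergian, so $\dot A=0$; then $S=A+\dot A=A$ is common to both connections and $T=0$ for both, whence Theorem~\ref{A} identifies them as one and the same connection. For the forward implication, suppose the Berwald and Chern connections coincide and call the common connection $D$. Each is almost-compatible, so $D$ satisfies the identity (\ref{2}) with $T=0$ for each of the two choices of $S$. Subtracting the two instances of (\ref{2}), the identical $A$-terms cancel and the metric terms agree (they are computed from the single connection $D$), leaving
\[
(A+\dot A)(\rho(\hat Z),X,Y)=A(\rho(\hat Z),X,Y)
\]
for all $\hat Z\in T_zTM_0$ and $X,Y\in\pi^*TM$. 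Because $\rho$ maps $T_zTM_0$ onto $\pi^*TM$, this forces $\dot A=0$, so $M$ is Landsbergian.

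The genuine content, and the main obstacle, lies not in this short deduction but in justifying the two table entries that make it possible: one must confirm that the classical Berwald and Chern--Rund connections are indeed the connections singled out by the abstract scheme of Theorem~\ref{A} with the data $(S,T)=(A+\dot A,0)$ and $(A,0)$ respectively. Granting this identification---together with the structural fact that in (\ref{2}) the tensor $S$ enters only through the horizontal slot $\rho(\hat Z)$ while $T$ enters only through $\mu(\hat Z)$, so that the hypothesis $T=0$ cleanly isolates $S$---the equivalence collapses to the single identity $\dot A=0$, which is the very definition of a Landsberg metric.
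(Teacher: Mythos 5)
Your deduction is correct, but be aware that the paper contains no proof of this statement at all: it is quoted as a well-known result with a pointer to \cite{BCS2}, and the nearest argument the paper actually supplies is for the companion statement, Theorem \ref{thm8} (Berwald versus Shen), which runs along a genuinely different line. There one computes the Christoffel symbols of the two connections explicitly (the Christoffel-symbol lemma of Section 6) and compares coefficients, $^b\Gamma^i_{jk}={}^s\Gamma^i_{jk}+A^i_{jk}+\dot A^i_{jk}$; the same formulas applied to the Berwald/Chern pair give a difference of exactly $\dot A^i_{jk}$, so coincidence of the connections is equivalent to $\dot A=0$ on the nose. Your route instead extracts the equivalence from the uniqueness clause of Theorem \ref{A} together with the table data $(S,T)=(A+\dot A,0)$ and $(A,0)$; the forward direction, subtracting the two instances of (\ref{2}) and invoking surjectivity of $\rho$ onto the fibres of $\pi^*TM$, is valid, since with $T=0$ the tensor $S$ indeed enters (\ref{2}) only through the slot $\rho(\hat Z)$, and $\mu$ and $(D_{\hat Z}g)$ are computed from the one common connection. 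What the abstract route buys is brevity and coordinate-freeness; what it costs is precisely what you flag --- the table identifications are asserted in the paper without proof (they go back to \cite{Sh1}) --- plus one nuance you do not flag: $\dot A$ is defined through $D_{\bar\ell}$ of a connection, so the datum $S=A+\dot A$ is a priori self-referential. This is harmless here because the connections in play share the same nonlinear connection and their difference tensor annihilates $\ell$ (so $D_{\bar\ell}$, hence $\dot A$, is the same for each of them), but a careful write-up should say so. The coefficient comparison is self-contained on these points and, as a bonus, produces the explicit difference tensor that the paper then feeds into the ODE-along-geodesics argument of Theorem \ref{thm8}.
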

In this relation we prove the following theorem.
  \begin{thm} Let $(M,F)$ be a complete Finsler manifold with bounded Cartan
 tensor. Then M is a Riemannian manifold if and only if the Berwald connection
 coincides with the Shen connection.\label{thm8}
\end{thm}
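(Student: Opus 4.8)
The plan is to reduce the coincidence of the two connections to the single tensor identity $\dot A=-A$ and then to integrate this identity along geodesics, exactly as in Theorems~\ref{thm6} and \ref{thm7}. The forward implication is immediate and needs neither completeness nor boundedness: if $F$ is Riemannian then $A=0$, hence also $\dot A=0$ by its definition, so the Berwald data $S=A+\dot A,\ T=0$ and the Shen data $S=0,\ T=0$ (read off from the classification table) both collapse to $S=0,\ T=0$. Since each connection is torsion-free and almost-compatible, the uniqueness part of Theorem~\ref{A} forces the two to be the same connection.

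For the converse, suppose the Berwald and Shen connections agree and denote the common connection by $D$. I would write the almost-compatibility condition (\ref{2}) twice for this single $D$: once with the Berwald tensors $S=A+\dot A,\ T=0$ and once with the Shen tensors $S=0,\ T=0$. Because the left-hand side $(D_{\hat Z}g)(X,Y)$ is the same in both, the common terms $2A(\rho\hat Z,X,Y)+2F^{-1}A(\mu\hat Z,X,Y)$ cancel and one is left with $(A+\dot A)(\rho\hat Z,X,Y)=0$ for every $\hat Z\in T_zTM_0$ and all $X,Y$. Taking $\hat Z$ horizontal, so that $\rho\hat Z$ is an arbitrary section of $\pi^*TM$, yields the pointwise identity $\dot A=-A$; in the orthonormal frame this is just the equality of the two expressions $g_{ij|k}=2A_{ijk}$ (Shen) and $g_{ij|k}=-2\dot A_{ijk}$ (Berwald) obtained from (\ref{2}) and Remark~5.1.

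It then remains to integrate $\dot A=-A$. Fixing $v\in I_xM$, letting $c$ be the unit-speed geodesic with $\dot c(0)=v$ and taking parallel sections $X(t),Y(t),Z(t)$ of $\pi^*TM$ along the canonical lift $\hat c$, I would set $A(t)=A(X(t),Y(t),Z(t))$. Along $\hat c$ one has $\frac{dA}{dt}=\dot A(t)=-A(t)$, so $A(t)=e^{-t}A(0)$. Completeness of $M$ lets the geodesic run over all $t\in\mathbb{R}$; letting $t\to-\infty$ and invoking $\|A\|<\infty$ forces $A(0)=0$. As $v,X,Y,Z$ are arbitrary this gives $A\equiv 0$, i.e. $F$ is Riemannian.

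The step I expect to be delicate is the extraction of $\dot A=-A$ from the mere coincidence of the connections: one must argue that equality of the Berwald and Shen connections forces the two prescribed right-hand sides of (\ref{2}) to agree, and then check that restricting to horizontal $\hat Z$ (so that $\mu\hat Z=0$) genuinely isolates the desired horizontal identity rather than a weaker trace of it. Once $\dot A=-A$ is secured, the completeness-and-boundedness integration is routine and mirrors the arguments already used in Theorems~\ref{thm6} and \ref{thm7} verbatim.
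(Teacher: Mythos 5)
Your proof is correct, and its skeleton matches the paper's: reduce the coincidence of the connections to the tensor identity $A+\dot A=0$, then integrate along unit-speed geodesics to get $A(t)=e^{-t}A(0)$ and kill $A$ using completeness and $\|A\|<\infty$ (the paper lets this last step ride on the arguments of Theorems \ref{thm6} and \ref{thm7}, exactly as you do). Where you differ is in how the key identity is extracted. The paper works in local coordinates: it states the Christoffel relation $^b\Gamma^i_{jk}={}^s\Gamma^i_{jk}+A^i_{jk}+\dot A^i_{jk}$ (a special case of the formulas in Lemma 6.1, obtained by the Koszul-type computation (\ref{6-1})--(\ref{6-5})), so coincidence immediately gives $A^i_{jk}+\dot A^i_{jk}=0$. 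You instead argue intrinsically: write the almost-compatibility condition (\ref{2}) twice for the single common connection $D$, with the Berwald data $S=A+\dot A,\ T=0$ and the Shen data $S=0,\ T=0$, subtract, and observe that $(A+\dot A)(\rho\hat Z,X,Y)=0$ for all $\hat Z$ forces $A+\dot A=0$ because $\rho$ restricted to horizontal vectors surjects onto $\pi^*TM$ (your worry about obtaining only a ``weaker trace'' is unfounded for exactly this reason; vertical $\hat Z$ just gives $0=0$). This buys you a frame-free argument that bypasses the Christoffel computation of Lemma 6.1 entirely, and it also cleanly settles the easy direction, which the paper leaves implicit, via the uniqueness clause of Theorem \ref{A}: when $A=0$ both data sets degenerate to $S=0,\ T=0$, so the connections must agree. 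One point worth making explicit in your write-up, which both you and the paper elide: $\dot A$ in the Berwald data is defined through $D_{\bar\ell}$ of the connection itself, so your subtraction is unambiguous precisely because the two connections are assumed to coincide, making $\dot A$ a single well-defined tensor; and in the integration step, $\frac{dA}{dt}=\dot A(t)$ along $\hat c$ uses that $g_{ij|k}\ell^k=0$ (by Remark 4.1, since $A$ and $\dot A$ annihilate $\ell$), so parallel transport along geodesics preserves unit vectors and the bound $\|A\|<\infty$ applies uniformly.
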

 \begin{proof} Simple calculation shows that
  $^b \Gamma^i_{jk}=^s\! \Gamma^i_{jk}+ A^i_{jk}+\dot A^i_{jk}$, where
  $^b\Gamma^i_{jk}$ and $^s\Gamma^i_{jk}$ are the Christoffel coefficients
   of Berwald and  Shen connections respectively. If $^b \Gamma^i_{jk}=^s\!\Gamma^i_{jk}$,
   then
$
A^i_{jk}+\dot A^i_{jk}=0.
 $
By the same argument as in the above theorems, we
 find $A+\dot A=0$ whose  solution is $A(t)=e^{-t}A(0)$. Completeness of $M$
 and the bounded Cartan tensor hypothesis, imply that $A=0$.
 \end{proof}
\setcounter{thm}{0}
\begin{lem}
The Christoffel symbols for  Berwald-type, Cartan-type and Shen-type
connections denoted by $^B\Gamma$, $^C\Gamma$ and $^S\Gamma$
respectively, are given by:
\[
^B\Gamma ^i _{jk} =\frac{g^{is}}{2}\{\frac{\delta g_{sj}}{\delta
x^k}-\frac{\delta g_{jk}}{\delta x^s}+\frac{\delta g_{ks}}{\delta
x^j}\}+{A ^i}_{\ jk}+(\kappa_{_1}\dot A^i_{\ jk} +\cdots+\kappa_{_m}
\overset{_m}{A ^i}_{\ jk}), \quad \quad \quad
\]
\[
^C\Gamma ^i _{jk} =\frac{g^{is}}{2}\{\frac{\delta g_{sj}}{\delta
x^k}-\frac{\delta g_{jk}}{\delta x^s}+\frac{\delta g_{ks}}{\delta
x^j}\}+C^i_{js}N^s_k+(\kappa_{_1}\dot A^i_{\ jk} +\cdots+\kappa_{_m}
\overset{_m}{A ^i}_{\ jk}),\quad \quad \quad
\]
\[
^S\Gamma ^i _{jk} =\frac{g^{is}}{2}\{\frac{\delta g_{sj}}{\delta
x^k}-\frac{\delta g_{jk}}{\delta x^s}+\frac{\delta g_{ks}}{\delta
x^j}\}+(\kappa_{_1}\dot A^i_{\ jk} +\cdots+\kappa_{_m}
\overset{_m}{A ^i}_{\ jk}),\quad \quad \quad \quad \quad \quad \quad
\]
where $\frac{\delta}{\delta x^j}:=\frac{\partial}{\partial
x^j}-N^i_j\frac{\partial}{\partial y^i}$.
\end{lem}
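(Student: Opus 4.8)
The plan is to read off each family's coefficients directly from its two defining relations --- the torsion condition (\ref{1}) and the almost-compatibility condition (\ref{2}) --- by the same Koszul-type manipulation that underlies Theorem \ref{A}. First I would record the horizontal non-metricity: evaluating (\ref{2}) on a horizontal argument $\hat Z=\bar e_k$, where $\rho(\bar e_k)=e_k$ and $\mu(\bar e_k)=0$, the two $\mu$-terms drop and one is left with $g_{ij|k}=2(A_{ijk}-S_{ijk})$, which is totally symmetric in $i,j,k$ since $A$ and every $\overset{_m}{A}$ is. This is the horizontal half of (\ref{9}) and agrees with the Remark of Section 3; it is the only place the tensor $S=\kappa_0 A+\kappa_1\dot A+\cdots+\kappa_m\overset{_m}{A}$ enters, and it is what separates the three families according as $S$ does or does not carry the leading term $A$.

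Next I would solve for the horizontal part. Writing the horizontal half of (\ref{9}) in the canonical frame $\frac{\delta}{\delta x^k}=\frac{\partial}{\partial x^k}-N^s_k\frac{\partial}{\partial y^s}$ attached to the nonlinear connection $N^s_k=F\Gamma^s_{kj}\ell^j$ gives $\frac{\delta g_{ij}}{\delta x^k}=g_{mj}\Gamma^m_{ik}+g_{im}\Gamma^m_{jk}+g_{ij|k}$. Cyclically permuting $i,j,k$ and taking the alternating sum, the antisymmetric part of $\Gamma$ drops out --- the horizontal torsion $\mathcal{S}$ vanishes because (\ref{1}) annihilates two horizontal arguments --- leaving the Levi-Civita-type term $\frac{g^{is}}{2}\{\frac{\delta g_{sj}}{\delta x^k}-\frac{\delta g_{jk}}{\delta x^s}+\frac{\delta g_{ks}}{\delta x^j}\}$ plus a correction assembled from the totally symmetric $g_{ij|k}$. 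Because $g_{ij|k}$ is totally symmetric this correction collapses to a single tensor, producing the common tail $\kappa_1\dot A^i_{jk}+\cdots+\kappa_m\overset{_m}{A^i}_{jk}$ together with the leading $A$-contribution whose presence or absence is exactly the Berwald/Shen dichotomy.

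Finally I would install the connection-specific term coming from (\ref{1}). For Cartan-type the mixed torsion is nonzero: evaluating (\ref{1}) on $(\dot e_j,\bar e_k)$ yields $\mathcal{T}_D(\dot e_j,\bar e_k)=T(e_j,e_k)$, whose components are the vertical coefficients $C^i_{jk}=F^{-1}g^{im}A_{mjk}$; passing from $\frac{\delta}{\delta x^k}$ to $\frac{\partial}{\partial x^k}=\frac{\delta}{\delta x^k}+N^s_k\frac{\partial}{\partial y^s}$ then contributes exactly the term $C^i_{js}N^s_k$. For Berwald-type and Shen-type $T=0$, so the mixed torsion vanishes and no such term appears. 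Collecting the three cases gives the three displayed formulas.

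The step I expect to be the main obstacle is the bookkeeping of the Cartan-tensor terms. One must keep scrupulous track of which $A$-contribution comes from the almost-compatibility term $2A(\rho(\hat Z),X,Y)$ in (\ref{2}), which from the chosen $S$, and which from the vertical/mixed torsion via (\ref{1}) together with the $\frac{\delta}{\delta x^k}\to\frac{\partial}{\partial x^k}$ conversion; these enter with genuinely different index patterns ($A^i_{jk}$ versus $C^i_{js}N^s_k$), so conflating or double-counting them is the natural pitfall. A secondary point requiring care is to confirm that one and the same canonical nonlinear connection $N^s_k$ underlies $\frac{\delta}{\delta x^k}$ in all three cases, so that the Levi-Civita-type base term is literally identical throughout.
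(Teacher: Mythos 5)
Your machinery is sound and is essentially the paper's own argument transplanted into the adapted frame: the paper also runs a Koszul-type permutation on the compatibility relation, but in natural coordinates, arriving at (\ref{6-5}) through $\gamma^i_{jk}$ and only then converting to $\frac{\delta}{\delta x^j}$, and it carries this out explicitly only for the Cartan-type case. Your individual steps check out: $g_{ij|k}=2(A-S)_{ijk}$ is totally symmetric, the horizontal torsion vanishes when $T=0$ (and for Cartan-type the mixed torsion gives vertical coefficients $C^i_{jk}$, whence the frame conversion $\frac{\partial}{\partial x^k}=\frac{\delta}{\delta x^k}+N^s_k\frac{\partial}{\partial y^s}$ correctly produces $C^i_{js}N^s_k$). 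But carry your cyclic sum to the end: it yields
\begin{equation}
\Gamma^i_{\ jk}=\frac{g^{is}}{2}\Big\{\frac{\delta g_{sj}}{\delta x^k}-\frac{\delta g_{jk}}{\delta x^s}+\frac{\delta g_{ks}}{\delta x^j}\Big\}+(S-A)^i_{\ jk},\nonumber
\end{equation}
since the totally symmetric $g_{ij|k}$ contributes exactly one copy of $2(A-S)$ in the alternating sum.

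Here is the genuine gap: at precisely this decisive point you retreat to the hedge ``the leading $A$-contribution whose presence or absence is exactly the Berwald/Shen dichotomy,'' without committing to which family gets the $A$-term and with what sign. Evaluating $(S-A)^i_{\ jk}$ against the table, Berwald-type has $S=A+\kappa_{_1}\dot A+\cdots$, so the $A$ \emph{cancels} and no standalone $A^i_{\ jk}$ survives, while Shen-type has $S=\kappa_{_1}\dot A+\cdots$, so a term $-A^i_{\ jk}$ \emph{does} survive --- the opposite distribution from the displayed formulas, which place $+A^i_{\ jk}$ in $^B\Gamma$ and nothing in $^S\Gamma$. No other source of an $A$-term exists in your scheme: with $T=0$ the vertical coefficients vanish, so natural-frame and $\delta$-frame coefficients coincide and the $C^i_{js}N^s_k$-type conversion term, which rescues the Cartan case, contributes nothing. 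Two sanity checks confirm that your completed computation, not the printed display, is correct: with $\kappa_{_1}=1$, $\kappa_{_{i\geq 2}}=0$, the classical Berwald connection on a non-Riemannian Berwald space coincides with the Chern connection, i.e.\ with the bare $\delta$-Koszul term, whereas the displayed $^B\Gamma$ would add $A^i_{\ jk}\neq 0$; and the relation $^b\Gamma^i_{jk}={}^s\Gamma^i_{jk}+A^i_{jk}+\dot A^i_{jk}$ used in Theorem \ref{thm8} forces the Shen connection to be Chern minus $A$, so $^S\Gamma$ cannot reduce to the pure Koszul term at $\kappa=0$. Thus your proposal, executed faithfully, proves $^B\Gamma^i_{jk}=\delta\hbox{-Koszul}+\kappa_{_1}\dot A^i_{\ jk}+\cdots$ and $^S\Gamma^i_{jk}=\delta\hbox{-Koszul}-A^i_{\ jk}+\kappa_{_1}\dot A^i_{\ jk}+\cdots$ (and confirms $^C\Gamma$ as stated, since there $S-A=\tilde S$); as written it neither derives the displayed $^B\Gamma$ and $^S\Gamma$ nor flags the conflict --- exactly the bookkeeping pitfall you yourself predicted and then left unresolved.
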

\begin{proof} We prove this lemma for Cartan-type connections only. In the
local coordinate  $(x^i , y^i)$ for $TM_0$, we write $
D_{{{\partial} \over {\partial x^i}}} {\partial_j} =^C\!\!\Gamma ^k
_{ij} {\partial_k}$ and $D_{{{\partial} \over {\partial y^i}}}
{\partial_j} =F^k _{ij} {\partial_k}$. Put $N^k _i = ^C\!\! \Gamma
^k _{ij} y^j=F\{\gamma ^k _{ij} \ell ^j -A^k _{il}
    \gamma ^l _{ab}\ell^a \ell ^b\}$ where  $
   \gamma ^k _{ij}={1 \over 2}g^{kl}\left\{{{\partial {g_{jl}}}\over
   {\partial
x^i}}+ {{\partial {g_{il}}}\over {\partial x^j}} -
    {{\partial {g_{ij}}}\over {\partial x^l}} \right\}
     $.  For Cartan-type connections we consider the compatible tensors $S$ and
     $T$ defined by
 $S=A+\tilde{S}$ and $T=A$, where $\tilde{S}=\kappa_{_{1}}\dot A+\cdots+
 \kappa_{_m}\overset{_m}{A}$. From (\ref{1}) and (\ref{2}) we have
  \begin{eqnarray}
   ^C \Gamma  ^k _{ij}&=&^C \Gamma  ^k _{ji}+N^s_iC^k_{sj}-N^s_jC^k_{si},\label{6-1}\\
  F ^k _{ij}&=&C^k_{ij}+ y^lF^s_{jl}C^k_{si},\label{6-2}\\
{{\partial}\over {\partial x^k}} (g_{ij})&=&g_{li}\ ^C \Gamma  ^l
_{kj}\ -g_{lj}{^C \Gamma ^l} _{ki}\
+2 \tilde{S}_{ijk},\label{6-3}\\
 {{\partial}\over {\partial y^k}} (g_{ij})&=&g_{jl}F^l_{ik}-g_{li}{F^l}_{kj}
. \label{6-4}
\end{eqnarray}
Permuting $i,j$  and $k$ in (\ref{6-3}) and using (\ref{6-1}), one
obtains
\begin{equation}
 ^C \Gamma ^k _{ij} =\gamma ^k _{ij}
+N^s_jC^k_{is}-g^{km}N^s_m C_{ijs}+ \tilde{S}^k _{ij}.\label{6-5}
  \end{equation}
  Since
 $
\frac{g^{is}}{2}\{\frac{\delta g_{sj}}{\delta x^k}-\frac{\delta
g_{jk}}{\delta x^s}+\frac{\delta g_{ks}}{\delta x^j}\}=\gamma ^i
_{jk}-g^{im}N^s_m C_{jks},
$
 we get the desired Christoffel symbols. For other  connections same method  can be used.
\end{proof}

 \setcounter{thm}{2}
\begin{cor}  Let $(M,F)$ be a Finsler manifold. The Berwald-type connection coincides with
 the Shen-type connection if and only if $F$ is Riemannian.
\end{cor}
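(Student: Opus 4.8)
The plan is to read both connections' Christoffel symbols directly off the preceding Lemma and subtract them. Comparing the displayed formulas for the Berwald-type symbols ${}^B\Gamma^i_{jk}$ and the Shen-type symbols ${}^S\Gamma^i_{jk}$, I observe that they share an identical Christoffel-like term $\frac{g^{is}}{2}\{\cdots\}$ together with an identical tail $\kappa_{_1}\dot A^i_{\ jk}+\cdots+\kappa_{_m}\overset{_m}{A^i}_{\ jk}$; the sole discrepancy is the extra Cartan term ${A^i}_{\ jk}$ that appears in the Berwald-type symbols but is absent from the Shen-type ones. Hence
\[
{}^B\Gamma^i_{jk}-{}^S\Gamma^i_{jk}={A^i}_{\ jk},
\]
where ${A^i}_{\ jk}=g^{is}A_{sjk}$, so the whole statement reduces to the claim that this Cartan term vanishes exactly when $F$ is Riemannian.

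For the forward implication I would assume the two connections coincide. Since two linear connections on $\pi^*TM$ agree precisely when their Christoffel symbols agree in a fixed coordinate system $(x^i,y^i)$, the displayed identity forces ${A^i}_{\ jk}=0$ for all indices. Because the fundamental tensor is positive-definite by axiom (iii) of a Finsler structure, $g^{is}$ is invertible; contracting ${A^i}_{\ jk}=g^{is}A_{sjk}=0$ with $g_{ti}$ then yields $A_{tjk}=0$, that is, the Cartan tensor vanishes identically. By the property recorded in the Preliminaries that $A=0$ implies $F$ Riemannian, this gives the conclusion. For the converse I would simply run the same computation backwards: if $F$ is Riemannian then $A=0$, so ${A^i}_{\ jk}=0$, the displayed difference vanishes, and ${}^B\Gamma^i_{jk}={}^S\Gamma^i_{jk}$ term by term.

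The argument carries no analytic content: unlike the completeness-based results earlier in this paper it needs neither completeness of $M$ nor any bound on the Cartan tensor, being a pointwise algebraic consequence of the preceding Lemma. Accordingly I do not expect a genuine obstacle. The one point deserving a moment's care is that both connections are built from the \emph{same} constants $\kappa_{_1},\dots,\kappa_{_m}$, so that the $\kappa$-dependent tails genuinely cancel and leave exactly ${A^i}_{\ jk}$; once this bookkeeping is checked, the equivalence is immediate.
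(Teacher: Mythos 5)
Your proof is correct and is precisely the argument the paper intends: the corollary is stated immediately after the Lemma on Christoffel symbols, and subtracting ${}^S\Gamma^i_{jk}$ from ${}^B\Gamma^i_{jk}$ (with the same constants $\kappa_{_1},\dots,\kappa_{_m}$, and with both connections torsion-free since $T=0$, so their vertical coefficients agree as well) leaves exactly ${A^i}_{\ jk}$, which vanishes if and only if $A=0$, i.e.\ $F$ is Riemannian. Your remark that no completeness or bounded-Cartan-tensor hypothesis is needed is also consistent with the paper, since here the difference is $A$ alone rather than $A+\dot A$ as in Theorem 6.1, so no ODE argument along geodesics is required.
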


\textbf{Acknowledgments}. The authors express their sincere thanks to Professor
Zhongmin Shen for his valuable suggestions and comments.

\end{document}